\RequirePackage{fix-cm}
\documentclass[smallcondensed]{svjour3}
\smartqed
\usepackage{graphicx}

\usepackage[T1]{fontenc}
\usepackage[utf8]{inputenc}
\usepackage{float}

\usepackage{amsmath}
\usepackage{amssymb}
\usepackage{stmaryrd}
\usepackage{amsfonts}
\usepackage{dsfont}
\usepackage{subcaption}
\captionsetup{compatibility=false}
\usepackage[english]{babel}

\usepackage{todonotes}

\newcommand{\dr}{\mathrm{d}}

\newcommand{\dd}[2]{\frac{\dr#1}{\dr#2}}
\newcommand{\pdd}[2]{\frac{\partial #1}{\partial #2}}

\newcommand{\bsprod}[2]{\left\langle #1,#2\right\rangle }
\newcommand{\evaluated}[2]{\left.#1\right|_{#2}}
\newcommand{\T}{\mathcal{T}}
\newcommand{\F}{\mathcal{F}}

\newcommand{\en}[1]{{\left\vert\kern-0.25ex\left\vert\kern-0.25ex\left\vert #1 
    \right\vert\kern-0.25ex\right\vert\kern-0.25ex\right\vert}}

\newcommand{\extension}{\mathds{E}}

\newcommand{\R}{\mathbb{R}}

\newcommand{\e}{\mathrm{e}}

\newcommand{\idx}[2]{{#1}_{#2}}
\newcommand{\pair}[1]{\{ \idx{#1}{1} , \idx{#1}{2} \}}
\newcommand{\jump}[1]{\llbracket #1 \rrbracket}

 \begin{document}
\date{}
\title{High Order Cut Finite Elements for the Elastic Wave Equation
\thanks{This research was supported by the Swedish Research Council (Grant No. 2014-6088).}
}
\author{Simon Sticko \and Gustav Ludvigsson \and Gunilla Kreiss}

\institute{
S. Sticko, ORCiD: 0000-0002-4694-4731, \email{simon.sticko@it.uu.se} 
\and
G. Ludvigsson, \email{gustav.ludvigsson@it.uu.se}
\and
G. Kreiss, \email{gunilla.kreiss@it.uu.se}
\at Division of Scientific Computing, Department of Information Technology, Uppsala University, Sweden
}

\maketitle

\begin{abstract}
A high order cut finite element method is formulated for solving the elastic wave equation.
Both a single domain problem and an interface problem are treated.
The boundary or interface are allowed to cut through the background mesh.
To avoid problems with small cuts, stabilizing terms are added to the bilinear forms corresponding to the mass and stiffness matrix.
The stabilizing terms penalize jumps in normal derivatives over the faces of the elements cut by the boundary/interface.
This ensures a stable discretization independently of how the boundary/interface cuts the mesh.
Nitsche's method is used to enforce boundary and interface conditions, resulting in symmetric bilinear forms.
As a result of the symmetry, an energy estimate can be made and optimal order a priori error estimates are derived for the single domain problem.
Finally, numerical experiments in two dimensions are presented that verify the order of accuracy and stability with respect to small cuts.

\keywords{Elastic \and Wave \and Cut \and Immersed \and Interface}
\subclass{65M60 \and 65M85}
\end{abstract}

\section{Introduction\label{sec:introduction}}
The elastic wave equation is important in several applications.
For example, materials in the earth's crust can be modeled as linear and elastic, and earthquakes give rise to seismic waves that propagate through the crust.
Other examples include non-destructive testing and propagation of waves in beams and other solid structures. 
High order accurate methods are especially attractive when solving the elastic wave equation.
The reason is that high order methods, in general, have lower work per dispersion error \cite{kreiss_comparison_1972}.
Seismic waves typically propagate over large distances and are therefore prone to dispersion error.
Also, elastic waves often propagate in media with complicated geometries. 
So, it is of interest to have numerical methods with high order of accuracy that can handle complicated geometries. 
Examples of such methods are discontinuous Galerkin (dG) methods \cite{de_basabe_interior_dg,riviere_discontinuous_2003}, and summation by parts (SBP) based finite difference methods \cite{duru2014stable,appelo2009stable}.
The dG methods usually handle the complicated geometries by using an unstructured grid that conforms to the boundary, meanwhile the SBP-based finite difference methods use a curvilinear grid to handle the complicated geometries.
While both of these methods work very well, it may at times be hard to find a good curvilinear mapping and it can be cumbersome to generate a good conforming grid.

In the present paper, we are interested in solving the elastic wave equation using the cut finite element method (Cut-FEM) with high order elements. 
Cut-FEM is an immersed method where boundaries and interfaces do not need to be aligned with the computational mesh.
For details on Cut-FEM see for example the review paper \cite{CutFEM2014}. 
Cut-FEM with high order elements has been studied in for example \cite{hansbo2017cut,johansson2015high,sticko2016higher}.
When using high order elements in Cut-FEM a few difficulties emerge.
One problem is generating a high order quadrature on the elements that are cut by the boundary or interface.
To accomplish this we use an algorithm by Saye \cite{Saye2015}.
Further, in the same way as for standard non-cut finite elements, the time step restriction becomes more severe when the element order is increased, which makes the time stepping more expensive.
Finally, stabilization terms are added (introduced in \cite{burman_ghost_2010,Burman2012,massing_stokes_2014}) in order to make the eigenvalues of the matrices bounded from above and below, independently of how the boundary/interface cuts the mesh.
Unfortunately, this stabilization makes the condition number of the mass matrix increase fast when the element order increases. 
This can make time stepping the discrete system more expensive since we need to solve a system involving the mass matrix during time stepping. 
The present paper builds on the work in \cite{hansbo2017cut}, where time-independent elasticity equations were solved using the Cut-FEM technique.

There are several reasons for using Cut-FEM to solve the elastic wave equation.
One example is when a boundary or interface has a complicated geometry.
Creating a computational mesh that conforms to this geometry can be expensive and time-consuming.
Using an immersed method could potentially be cheaper.
Another example is when the geometry of a boundary or interface is not known a priori.
This could, for example, be the case if the geometry of the interface is hard or impossible to measure.
One way to get around this is to send waves from the surface that propagate toward the interface and get reflected from it.
By measuring the reflected waves it is possible to solve an inverse problem to compute the shape of the interface.
In order to solve the inverse problem, one would need to iterate over a lot of different interface geometries.
Here, an immersed method would be useful since remeshing the interface geometry could be very time-consuming.
Similar inversion problems have been of interest for some time and were introduced partly by Tarantola in the papers \cite{tarantola1984inversion,tarantola1987inversion,tarantola1988theoretical}.

The present paper is organized as follows.
In Section~\ref{sec:model} the mathematical problems are stated.
These are the elastic wave equation posed on a single domain and as an interface problem.
This is followed by the explanation of the method in Section~\ref{sec:method}.
In Section~\ref{sec:theory} we present a proof of convergence for the single domain problem, and in Section~\ref{sec:experiments} we present numerical results on the order of convergence and robustness with respect to small cuts. Finally, we end with a discussion in Section~\ref{sec:discussion}. \section{Model of the Problem \label{sec:model}}
We are interested in the elastic wave equation posed both on a single domain ${\Omega\subset \R^d}$ (Figure~\ref{fig:single_domain}), and as an interface problem on a composite domain $\Omega={\Omega_1\cup \Omega_2} \subset \R^d$ (Figure~\ref{fig:interface_domain}).
The interface problem is interesting when we have two materials in contact with each other, which occurs frequently in applications due to the layered structure of the earth's crust.
On the other hand, the single domain problem is relevant if we have an inclusion of air or vacuum inside another material.

\begin{figure}[H]
\begin{subfigure}[b]{.3\paperwidth}
  \centering
  \includegraphics[width=.17\paperwidth]{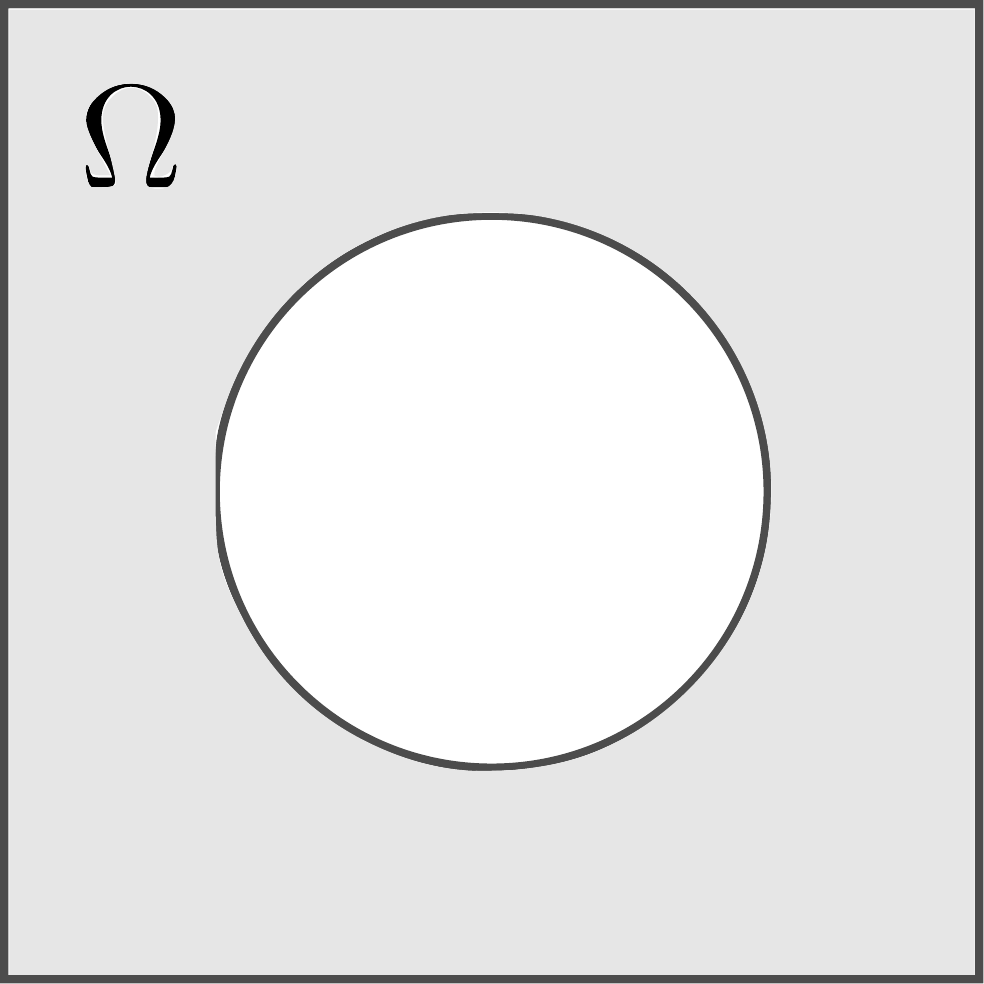}
  \caption{Single domain\label{fig:single_domain}}
\end{subfigure}
\begin{subfigure}[b]{.3\paperwidth}
  \centering
  \includegraphics[width=.17\paperwidth]{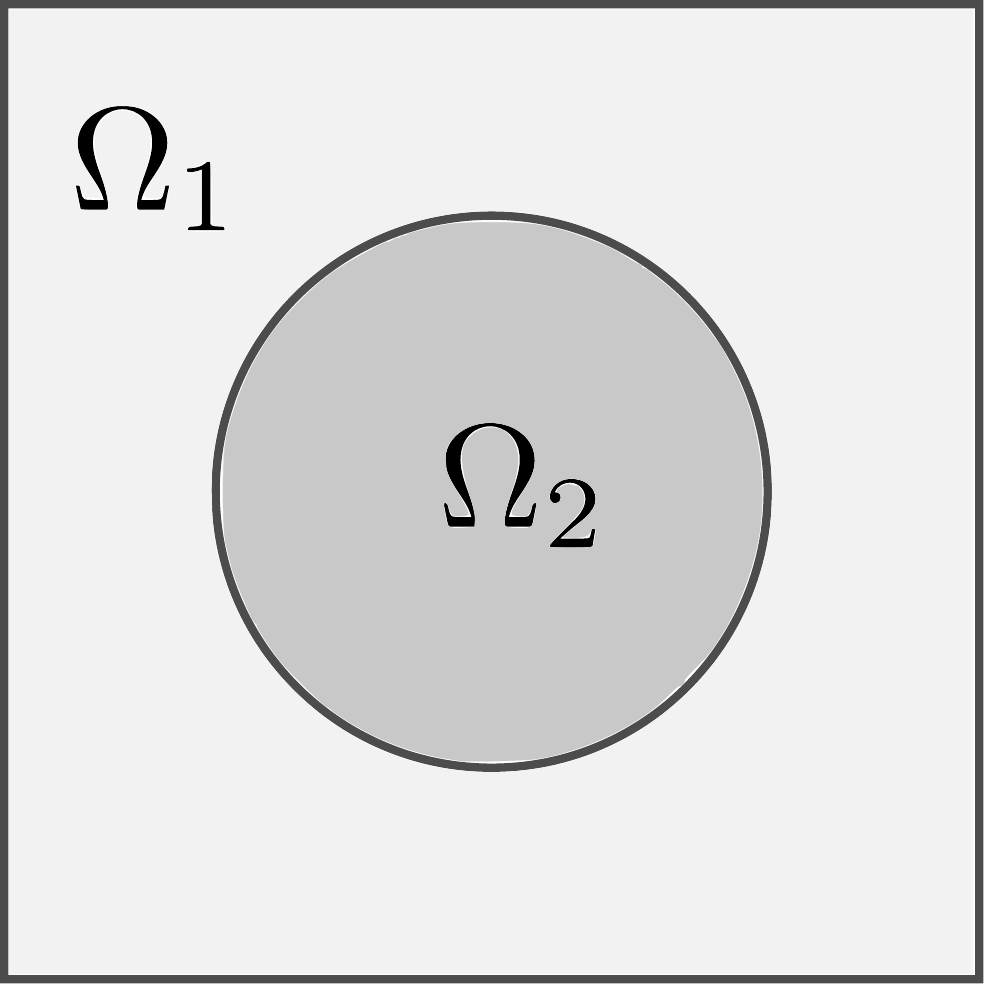}
  \caption{Composite domain for the interface problem\label{fig:interface_domain}}
\end{subfigure}
\caption{Considered domains\label{fig:domains}}
\end{figure}

\subsection{Single Domain Problem}\label{sec:elastic_single}
Let $n$ denote the outward unit normal to $\partial \Omega$, and assume that $\partial \Omega$ is partitioned such that $\partial \Omega=\Gamma^N\cup \Gamma^D$, with $\Gamma^N\cap \Gamma^D=\emptyset$.
The single domain problem reads:\begin{align}
\rho \ddot{u} &= \nabla \cdot \sigma (u) + f, \quad x\in \Omega, \label{eq:EWESingleDomain} \\ 
\sigma(u) \cdot n &= g^N, \quad x \in \Gamma^N, \label{eq:EWE_Neu} \\ 
u &= g^D,\quad x\in \Gamma^D, \label{eq:EWE_Dir} \\
u &= u^0, \quad t=0, \label{eq:EWE_Single_initial_u}\\
\dot{u} &= w^0, \label{eq:EWE_Single_initial_dudt} \quad t=0,
\end{align}
where $u$ is the displacement vector, $\rho$ is the density and $\sigma$ is the stress tensor.
We shall assume that $\Gamma^D$ and $\Gamma^N$ are sufficiently smooth.
Furthermore, we assume that we are working with a linear, homogeneous and isotropic material.
When this is the case the stress in the material is given by
\begin{equation}
\sigma_{ij}(u) = 2\mu \epsilon_{ij} (u) + \lambda (\nabla \cdot u)\delta_{ij},
\label{eq:Stress}
\end{equation}
where $\delta_{ij}$ is the Kronecker delta function and $\epsilon$ is the strain tensor defined as
\begin{equation} 
\epsilon_{ij}(u) = \frac{1}{2} \left(\pdd{u_i}{x_j} + \pdd{u_j}{x_i} \right).
\label{eq:Strain}
\end{equation}
In \eqref{eq:Stress} $\lambda$ and $\mu$ are the Lamé-parameters, which are material dependent scalar constants. 

\subsection{Interface Problem} \label{sec:elastic_elastic}
Consider now an interface problem on the domain illustrated in Figure~\ref{fig:interface_domain}.
We have a composite domain consisting of two elastic materials with material-parameters $\rho_i$, $\lambda_i$, $\mu_i$.
In this case, the problem is given by
\begin{align}
\rho_i \ddot{u}_i &= \nabla \cdot \sigma (u_i) + f_i, \quad x\in \Omega_i, \label{eq:EWECompositeDomain} \\ 
\sigma(u_i) \cdot n_i &= g^N_i, \quad x \in \Gamma^N_i, \label{eq:EWE_Neu_Composite} \\ 
u_i &= g^D_i,\quad x\in \Gamma^D_i, \label{eq:EWE_Dir_Composite} \\
\jump{u} &= 0, \quad x\in \Gamma_I, \label{eq:EWE_Interface_Composite_1} \\
\jump{\sigma(u) \cdot n} &= 0, \quad x\in \Gamma_I\label{eq:EWE_Interface_Composite_2},\\
u_i &= u_i^0, \quad t=0, \label{eq:EWE_Interface_initial_u}\\
\dot{u}_i &= w_i^0, \quad t=0, \label{eq:EWE_Interface_initial_dudt}
\end{align}
where $u_i$ is the displacement vector in material $i$ and the stress and strain tensors are defined analogously to \eqref{eq:Stress} and \eqref{eq:Strain}.
We assume that $\Gamma^D$, $\Gamma^N$ and $\Gamma^I$ are sufficiently smooth.
Here $n_i$ is the outward normal to $\Omega_i$ and $n$ is the normal pointing from $\Omega_2$ to $\Omega_1$ ($n=n_2$).
$\jump{\cdot} $ defines the jump over the interface:
\begin{equation}
\jump{u}=u_2(x)-u_1(x), \quad x\in \Gamma_I.
\label{eq:jump}
\end{equation}
Since we have several normals defined ($n_1$, $n_2$ and $n$), \eqref{eq:EWE_Interface_Composite_2} can be interpreted in two different ways.
To avoid any confusion we use the convention that the normal is fixed
\begin{equation}
\jump{\sigma(u) \cdot n} = \sigma(u_2) \cdot n-\sigma(u_1) \cdot n, \quad x\in \Gamma_I.
\end{equation}  \section{Numerical Method\label{sec:method}}
\noindent Let $\Omega$ be covered by a background mesh, $\T_B$, as in Figure~\ref{fig:aligned_immersed_parts}.
We shall only consider the case when the mesh consists of quadrilaterals that are squares and of the same size. 
Let $h$ denote their side length. 
Let the boundary or interface be partitioned as illustrated in Figure~\ref{fig:aligned_immersed_parts}.
That is, for the single domain we assume that $\partial \Omega=\Gamma^A\cup\Gamma^C$ (with $\Gamma^A\cap\Gamma^C=\emptyset$), where $\Gamma^A$ is aligned with the boundary of the mesh while $\Gamma^C$ cuts through it.
Correspondingly for the interface problem, we assume that $\partial \Omega \cup \Gamma_I=\Gamma^A\cup\Gamma^C$.
Let $\T^C$ denote the elements that are intersected by $\Gamma^C$:
\begin{equation}
\T^C=\{T\in \T_B : T\cap\Gamma^C\neq \emptyset\},
\end{equation}
as illustrated in Figure~\ref{fig:intersected_elements}.

Let $i\in\{1,2\}$ denote an index indicating the domain, which will be omitted for the single domain problem.
Let  $\T_i$, denote the smallest set of elements in the background mesh covering $\Omega_i$, as illustrated in Figure~\ref{fig:elements_solved_for}.
In particular, for the single domain problem, $\T$ is the smallest set of elements covering $\Omega$.
To be precise let
\begin{equation}
\T_i=\{T \in \T_B : T \cap \Omega_i\neq \emptyset \}.
\end{equation}

Now introduce the spaces
\begin{equation}
V_h^i={\{v\in [C^0(\Omega_i)]^d : \evaluated{v}{T} \in [Q_p(T)]^d, \, T \in \T_i \}}.
\end{equation}
Where $Q_p(T)$ denotes the $p$:th order Lagrange element with Gauss-Lobatto nodes over $T$.
For high element orders, Gauss-Lobatto nodes result in a mass matrix with better properties than if equidistant nodes are used \cite{karniadakis_spectral_hp_1999}.
For the single domain problem, we solve for the solution $u_h\in V_h$, while for the interface problem we solve for the pair $\pair{u}\in V_h^1\times V_h^2$.
For the interface problem, this means that the degrees of freedom are doubled over the elements in the set $\T^C$.

\begin{figure}
\begin{subfigure}[b]{.3\paperwidth}
\centering
  \includegraphics[height=.17\paperwidth]{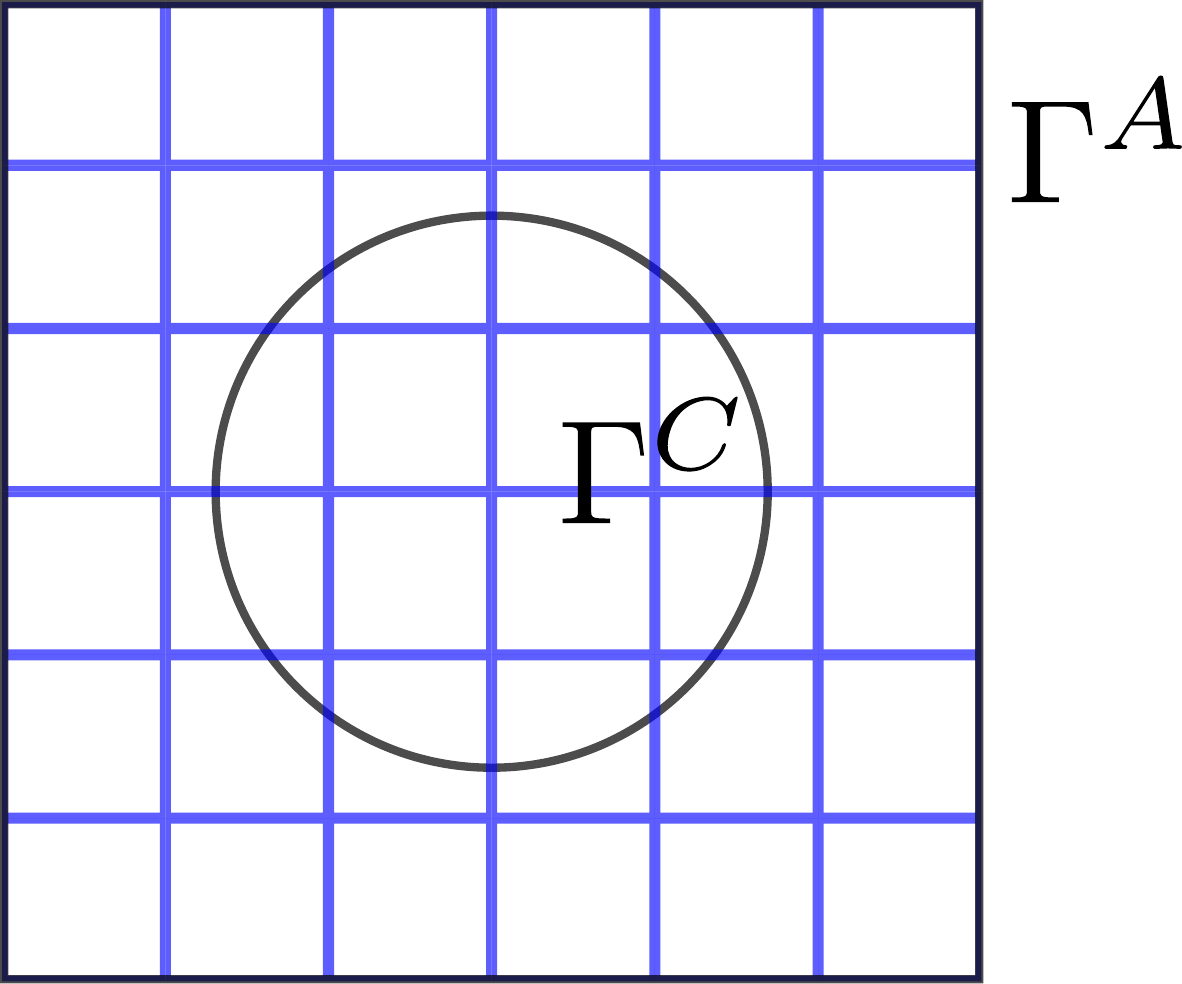}
  \caption{\label{fig:aligned_immersed_parts}}
\end{subfigure}
\begin{subfigure}[b]{.3\paperwidth}
  \centering
  \includegraphics[width=.17\paperwidth]{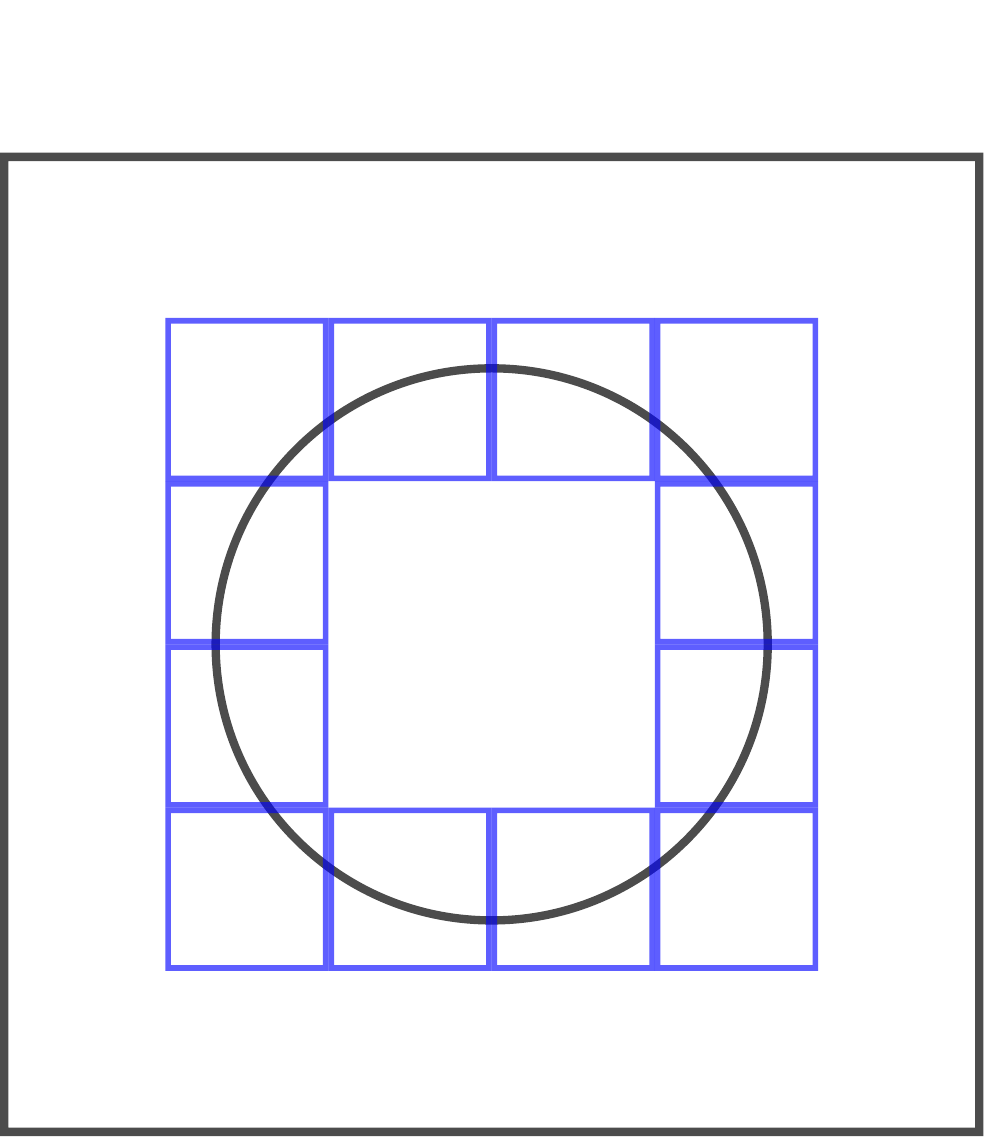}
  \caption{$\T^{C}$\label{fig:intersected_elements}}
\end{subfigure}
\caption{(a) Parts of the boundary or interface that are aligned with or immersed in the mesh.
(b) Set of elements, $\T^C$, intersected by $\Gamma^C$.}
\end{figure}

\begin{figure}
\begin{subfigure}[b]{.3\paperwidth}
  \centering
  \includegraphics[width=.17\paperwidth]{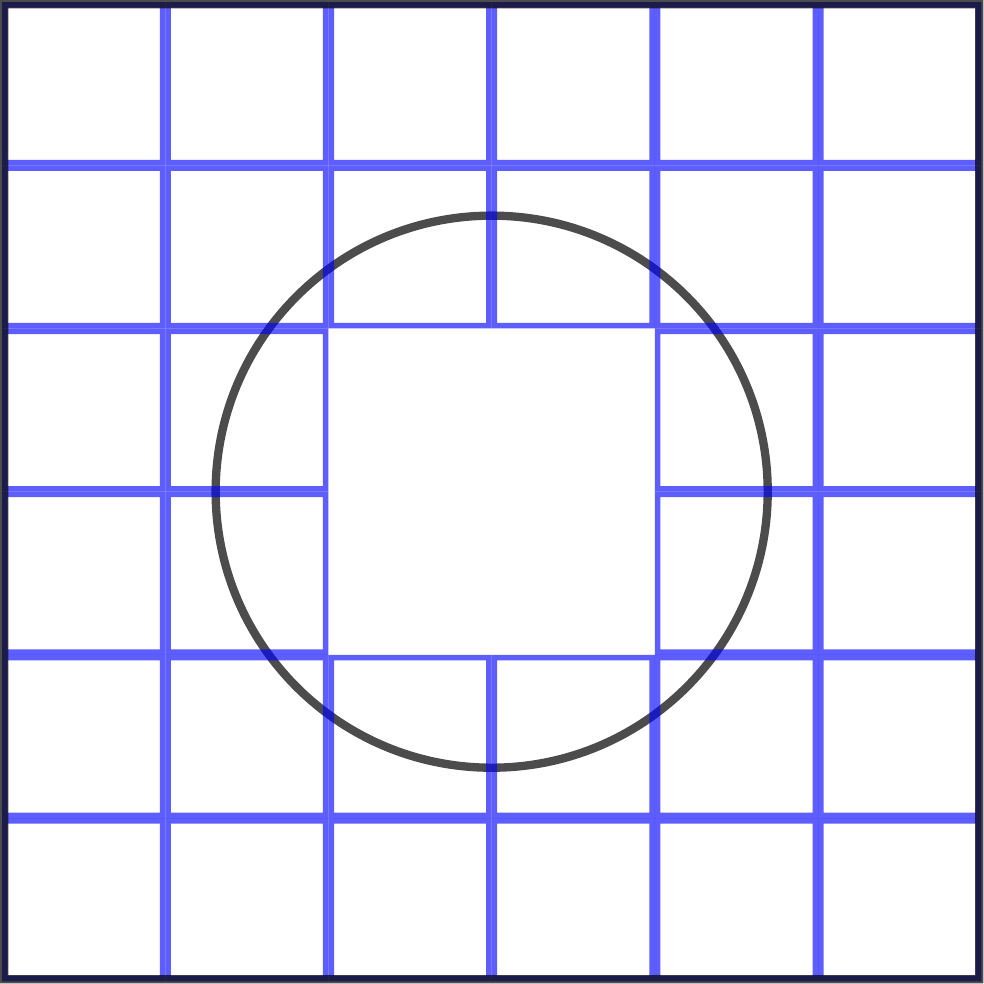}
  \caption{\label{fig:elements_solved_for_outside}}
\end{subfigure}
\begin{subfigure}[b]{.3\paperwidth}
  \centering
  \includegraphics[width=.17\paperwidth]{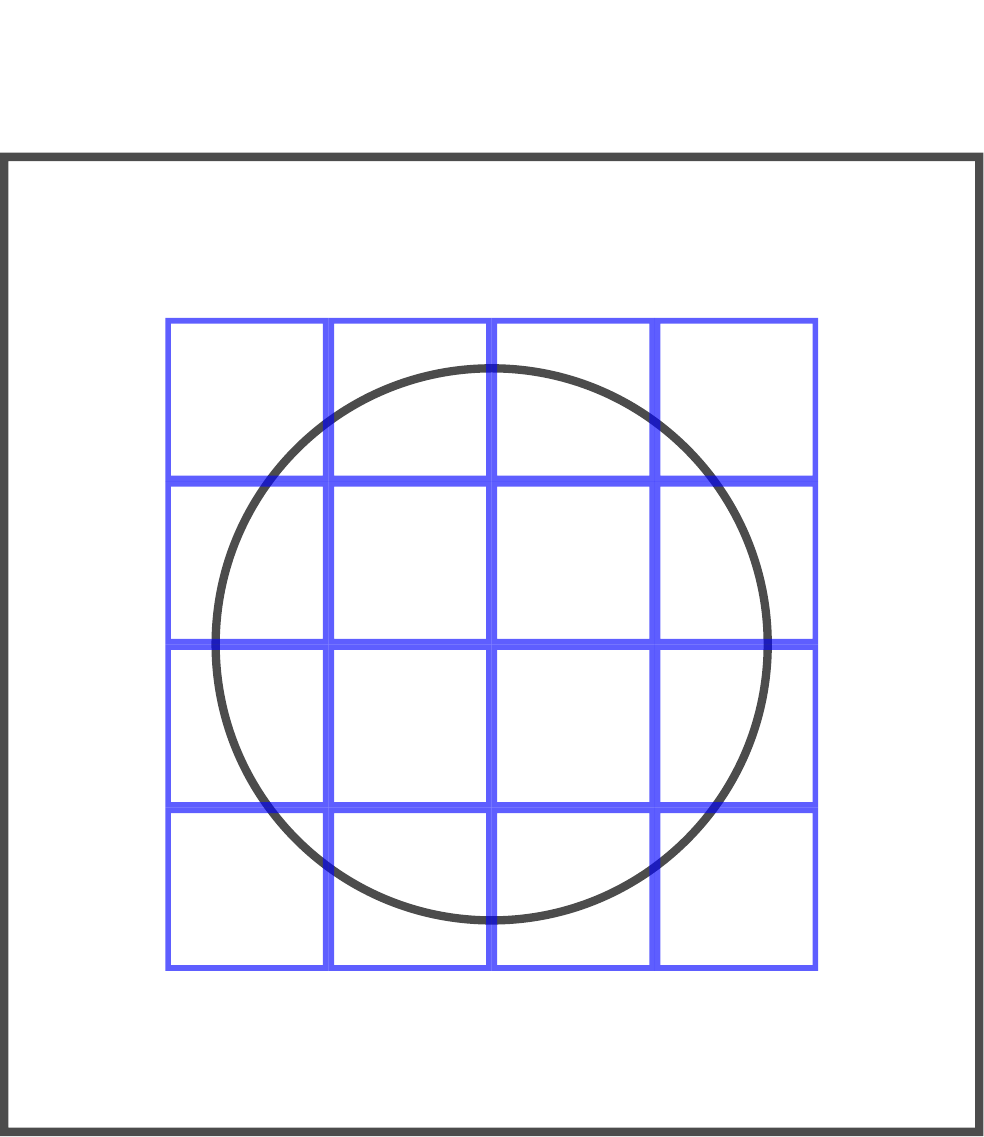}
  \caption{\label{fig:elements_solved_for_inside}}
\end{subfigure}
\caption{Smallest set of elements covering (a) $\Omega$, $\Omega_1$ and (b) $\Omega_2$ \label{fig:elements_solved_for}}
\end{figure}

Since the weak formulations for the single domain and the interface problem are very similar we discuss their derivation more or less simultaneously.
We shall use the following standard inner products
\begin{equation}
(u,v)_{\Omega}=\int_{\Omega} u v d\Omega, \quad \bsprod{u}{v}_{\Gamma}=\int_{\Gamma} u v d\Gamma,  
\label{eq:inner_products}
\end{equation}
where the subscripts indicate over which domain the integration takes part.
If $u$ or $v$ in \eqref{eq:inner_products} are tensors then contraction to a scalar is implied.
Note that the angular brackets denote integration over a curve in 2D (or surface in 3D).

By multiplying \eqref{eq:EWESingleDomain} or \eqref{eq:EWECompositeDomain} by a test function, integrating by parts and simplifying (for details see for example \cite{larson_fem_book}) we get
\begin{equation}
\begin{split}
(\rho_i\ddot{u}_i,v_i)_{\Omega_i} &
+2\mu_i (\epsilon(u_i), \epsilon(v_i))_{\Omega_i} 
+\lambda_i (\nabla \cdot u_i, \nabla \cdot v_i)_{\Omega_i} 
-\bsprod{\sigma(u_i)\cdot n_i}{v_i}_{\partial \Omega_i\setminus\Gamma^N_i} \\
&=(f_i,v_i)_{\Omega_i}
+ \bsprod{g^N_i}{v_i}_{\Gamma^N_i}, \quad \forall v_i\in V_h^i.
\end{split}
\label{eq:naiveWeakForm}
\end{equation}
Note that the Dirichlet boundary conditions are consistent with the following terms
\begin{align}
-\bsprod{u_i}{\sigma(v_i) \cdot n_i}_{\Gamma^D_i} &= -\bsprod{g^D_i}{\sigma(v_i)\cdot n_i}_{\Gamma^D_i}, \label{eq:dirichletConsistent1} \\ 
\lambda_i \frac{\gamma_D}{h} \bsprod{u_i}{v_i}_{\Gamma^D_i} &=\lambda_i \frac{\gamma_D}{h} \bsprod{g^D_i}{v_i}_{\Gamma^D_i}, \label{eq:dirichletConsistent2} \\
2\mu_i \frac{\gamma_D}{h} \bsprod{u_i\cdot n_i}{v_i\cdot n_i}_{\Gamma^D_i} &=  2\mu_i \frac{\gamma_D}{h} \bsprod{g^D_i\cdot n_i}{v_i\cdot n_i}_{\Gamma^D_i}.\label{eq:dirichletConsistent3}
\end{align}
So in order to enforce the boundary conditions by Nitsche's method we add \eqref{eq:dirichletConsistent1}--\eqref{eq:dirichletConsistent3} to \eqref{eq:naiveWeakForm}.
Here, $\gamma_D$ is a constant controlling how strongly the Dirichlet boundary condition is enforced.
We now have
\begin{equation}
(\rho_i\ddot{u}_i,v_i)_{\Omega_i}
+a_{i}(u_i,v_i) 
-\bsprod{\sigma(u_i)\cdot n_i}{v_i}_{\partial \Omega_i\setminus{(\Gamma^D_i \cup \Gamma^N_i)}} 
=L_i(v), \quad \forall v_i\in V_h^i,
\label{eq:weak_form_starting_point}
\end{equation}
where
\begin{equation} \label{eq:symmetric_weak_form}
a_i(u,v) = B_i(u,v) + D_i(u,v),
\end{equation}
and
\begin{equation}\label{eq:LinearFormSingleDomain}
L_i(v_i) = (f_i,v_i)_{\Omega_i} +\bsprod{g_N}{v_i}_{\Gamma^N_i} + L^D_{i}(v_i).
\end{equation}
In \eqref{eq:weak_form_starting_point} the term $B_i$ corresponds to integration over the ``bulk''
\begin{align}
B_{i}(u_i,v_i) &= 2\mu_i (\epsilon(u_i), \epsilon( v_i))_{\Omega_i} + \lambda_i (\nabla \cdot u_i, \nabla \cdot v_i)_{\Omega_i},
\label{eq:BiLinearBulkPart}
\end{align}
and the terms $D_i$ and $L^D_i$ enforce the Dirichlet boundary condition over $\Gamma^D_i$:
\begin{align}
D_{i}(u_i,v_i) =& 
-\bsprod{\sigma(u_i)\cdot n_i}{v_i}_{\Gamma^D_i} \nonumber \\
& -\bsprod{u_i}{\sigma(v_i)\cdot n_i}_{\Gamma^D_i}
 +\frac{\gamma_D}{h}\left(2\mu_i \bsprod{u_i}{v_i}_{\Gamma^D_i}
+\lambda_i \bsprod{u_i\cdot n_i}{v_i \cdot n_i}_{\Gamma^D_i}\right), \label{eq:BiLinearBoundaryPart}\\
L^D_{i}(v_i) 
=& - \bsprod{g^D_i}{\sigma(v_i)\cdot n_i}_{\Gamma^D_i}
+\frac{\gamma_D}{h} \left(2\mu_i \bsprod{g^D_i}{v_i}_{\Gamma^D_i} 
+ \lambda_i \bsprod{g^D_i\cdot n_i}{ v_i\cdot n_i}_{\Gamma^D_i}\right).\label{eq:LinearBoundaryPart}
\end{align}
Note that the terms \eqref{eq:dirichletConsistent1}--\eqref{eq:dirichletConsistent3} were added in a way so that $a_i$ in \eqref{eq:symmetric_weak_form} is a symmetric bilinear form.
Now \eqref{eq:weak_form_starting_point} is the starting point for the weak formulations for both the single domain and the interface problem.
Note also that for the single domain we have
\begin{equation*}
\partial\Omega\setminus(\Gamma^D \cup \Gamma^N)=\emptyset,
\end{equation*}
while for the interface problem
\begin{equation*}
\partial\Omega_i\setminus(\Gamma^D_i \cup \Gamma^N_i)=\Gamma_I.
\end{equation*}

\subsection{Stabilizing Small Cuts}
A common problem for immersed methods is robustness with respect to small cuts.
In order to understand this problem consider the single domain.
Since $\Gamma_C$ intersects the mesh in an arbitrary way an element $K$ may have an arbitrarily small intersection with the domain so that the size of $K\cap\Omega\ll h^d$.
For each element we integrate over $K\cap\Omega$.
For the mass matrix this means that the smallest eigenvalue can be arbitrarily small, and in turn that the condition number can be arbitrarily large.
For the stiffness matrix, the problem is even worse.
The term \eqref{eq:dirichletConsistent1} that we add to enforce the boundary condition can make some eigenvalues of the stiffness matrix negative, which would make the method unstable.

A suggested way to remedy this problem is to add a stabilizing term, $j_i$, both to the term that corresponds to the mass matrix and to the term that corresponds to the stiffness matrix:
\begin{equation} \label{eq:WeakFormM}
M_i(u_i,v_i) = (\rho_i u_i,v_i)_{\Omega_i} + \gamma_M^i j_i(u_i,v_i),
\end{equation}
\begin{equation} \label{eq:WeakFormA}
A_i(u_i,v_i) = a_{i}(u_i,v_i) + \frac{\gamma_A^i}{h^2}j_i(u_i,v_i).
\end{equation}
Here, $\gamma_M^i$ and $\gamma_A^i$ are scalar constants that control how much stabilization is added.
In order to explain the definition of $j_i$ let $\F_i$ denote the faces illustrated in Figure~\ref{fig:stabilized_faces}.
That is, the faces of $\T^C$ excluding the boundary faces of $\T_i$.
To be precise let
\begin{equation}
\F_i=\{F=T_a\cap T_b : T_a \in \T^C \text{ or } T_b \in \T^C,\quad T_a, T_b\in \T_i \}.
\end{equation}
We now define the stabilization term as
\begin{equation}
j_i(u,v)=\sum_{F\in \F_{i}} 
\sum_{k=1}^p \frac{h^{2k+1}}{(2k+1)(k!)^2}\bsprod{[\partial_n^k u_i]}{[\partial_n^k v_i]}_F.
\label{eq:jump_stabilization}
\end{equation}
Here, $\partial^k_n v_i$ denotes the $k$:th derivative in the direction of the face normal, $n$, and $[\cdot]$ defines the jump over a face $F$:
\begin{equation}
[u_i]=\evaluated{u_i}{F_{+}}-\evaluated{u_i}{F_{-}}.
\end{equation}
Note that $[\cdot]$ is different from $\jump{\cdot}$ in \eqref{eq:jump} since we have $u_i$ on both sides of $F$.
The stabilization in \eqref{eq:jump_stabilization} was suggested first in \cite{burman_ghost_2010} and used first for the Poisson equation in \cite{Burman2012}.
For a nice explanation of why it works see \cite{massing_stokes_2014}.

With stabilization one can prove the following inequalities for the bilinear form $M_i$ 
\begin{equation}
C_L\| v \|_{\Omega_i^\star}^2\leq M_i(v,v)\leq C_U \| v \|_{\Omega_i^\star}^2, \quad \forall v\in V_h^i,
\label{eq:mass_norm_equivalence}
\end{equation}
where $\Omega^\star_i$ is defined as the domain that $\T_i$ covers:
\begin{equation}
\Omega^\star_i=\bigcup_{T\in\T_i} T.
\end{equation}
In \eqref{eq:mass_norm_equivalence} $C_L$ and $C_U$ are positive constants that depend on the element order but not on $h$.
From \eqref{eq:mass_norm_equivalence} we immediately get that the eigenvalues of the stabilized mass matrix are bounded independently of how the boundary/interface cuts the mesh.
In turn, this bounds the condition number independently of the location of the boundary/interface.
Unfortunately (as noted in both \cite{sticko2016higher} and \cite{hansbo2017cut}) the constant in the bound increases very fast with the order of the elements.
With stabilization, one can also show that the bilinear form $A$ is continuous and coercive independently of how the boundary/interface cuts the mesh.
This result and the one in \eqref{eq:mass_norm_equivalence} were proved for the time-independent elasticity equations in \cite{hansbo2017cut}.

\begin{figure}
\begin{subfigure}[b]{.3\paperwidth}
  \centering
  \includegraphics[width=.17\paperwidth]{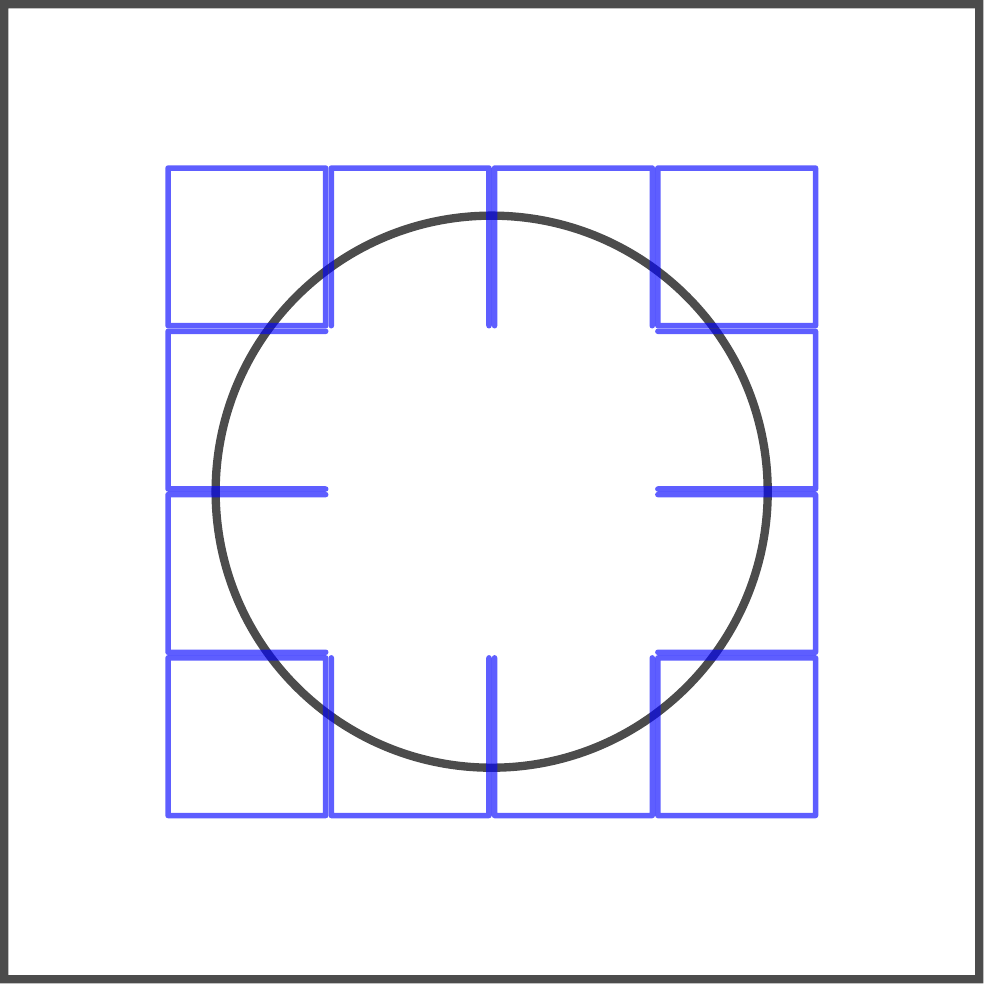}
  \caption{$\F$, $\F_1$\label{fig:stabilized_faces_outside}}
\end{subfigure}
\begin{subfigure}[b]{.3\paperwidth}
  \centering
  \includegraphics[width=.17\paperwidth]{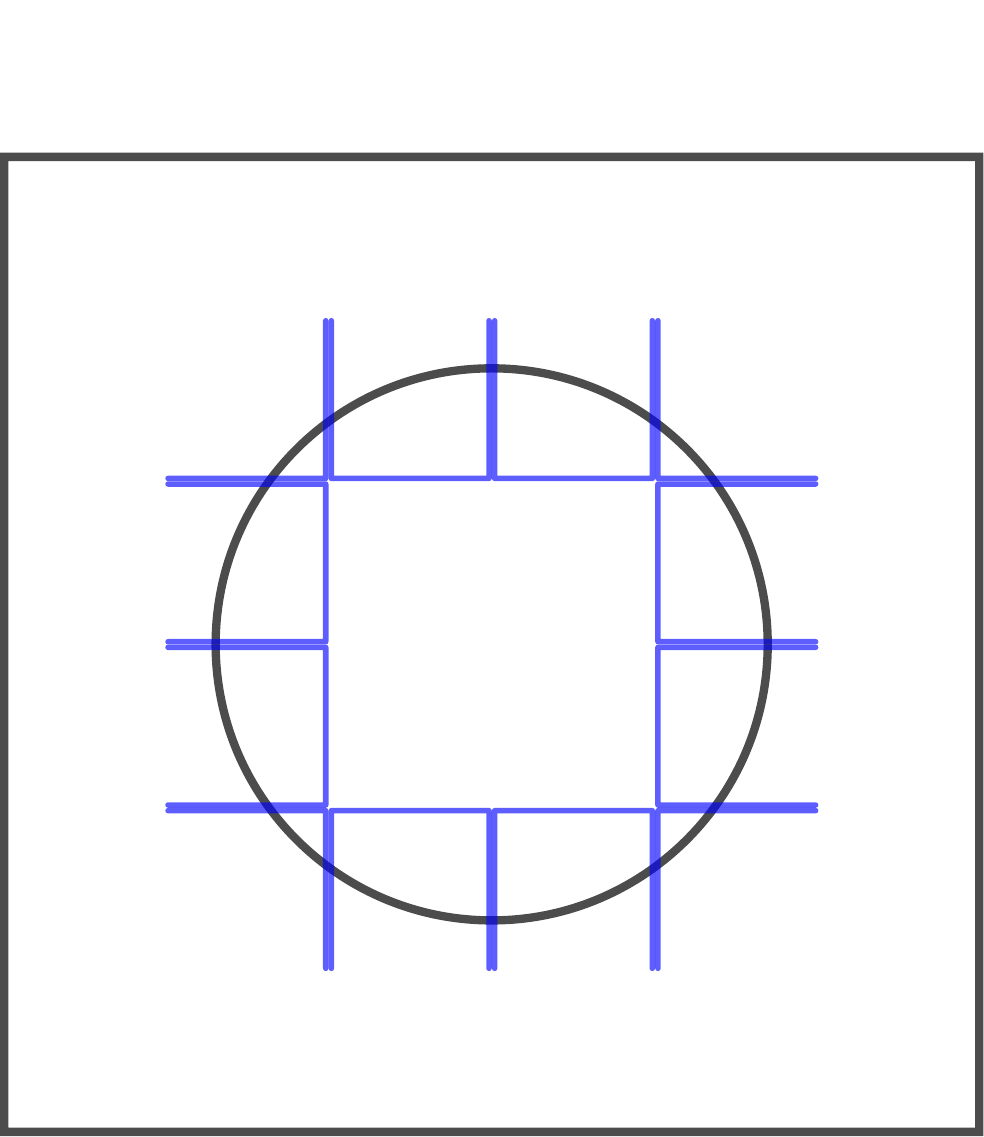}
  \caption{$\F_2$ \label{fig:stabilized_faces_inside}}
\end{subfigure}
\caption{Set of faces where the stabilization is applied \label{fig:stabilized_faces}}
\end{figure}

\subsection{Weak Form for the Single Domain Problem}
For the single domain we have that $\partial\Omega=\Gamma^D \cup \Gamma^N$, so by starting from \eqref{eq:weak_form_starting_point} and adding
the stabilizing terms we get the weak form for the single domain problem:
Find $u_h$ so that for each fixed $t\in (0, T]$, $u\in V_h$ such that 
\begin{equation}
M(\ddot{u},v)+A(u,v)=L(v), \quad \forall v \in V_h.
\label{eq:FEMethod_single}
\end{equation}  

\subsection{Weak Form for the Interface Problem}
We now want to derive the weak formulation for the interface problem \eqref{eq:EWECompositeDomain}--\eqref{eq:EWE_Interface_initial_dudt}.
First, let $\kappa_1>0$ and $\kappa_2>0$ fulfill $\kappa_1+\kappa_2=1$ and let $\{\cdot\}$ to denote the following convex combination:
\begin{equation}
\{v\}=\kappa_1 v_1+\kappa_2 v_2.
\end{equation}
By using that $\Omega_i\setminus(\Gamma^D_i \cup \Gamma^N_i)=\Gamma_I$, $n=n_2=-n_1$ and the condition \eqref{eq:EWE_Interface_Composite_2} it is straightforward to verify that 
\begin{align}
\sum_{i=1}^2 -\bsprod{\sigma(u_i)\cdot n_i}{v_i}_{\partial \Omega_i\setminus{(\Gamma^D_i \cup \Gamma^N_i)}}&=
\bsprod{\sigma(u_1)\cdot n}{v_1}_{\Gamma_I}-\bsprod{\sigma(u_2)\cdot n}{v_2}_{\Gamma_I} \nonumber\\
&=-\bsprod{\{ \sigma(u) \cdot n\}}{\jump{v}}_{\Gamma_I}.
\label{eq:first_convex_jump_term}
\end{align}
Note also that the interface condition \eqref{eq:EWE_Interface_Composite_1} is consistent with the following terms
\begin{align}
-\bsprod{\jump{u}}{\{\sigma(v)\cdot n\}}_{\Gamma_I}=0, \label{eq:penalty_interface_symmetry}\\ 
\frac{\gamma_I}{h}\bsprod{\jump{u}}{\jump{v}}_{\Gamma_I}=0 \label{eq:penalty_interface_positivity}.
\end{align}
Here, $\gamma_I$ is a positive constant which will control how strongly the interface condition is enforced.
Now we add \eqref{eq:weak_form_starting_point} for each domain, use \eqref{eq:first_convex_jump_term} and add \eqref{eq:penalty_interface_symmetry}, \eqref{eq:penalty_interface_positivity} and stabilization to obtain the finite element method: 
Find $u=\pair{u}$ so that for each fixed $t\in (0, T]$, $u\in V_h^1\times V_h^2$ such that 
\begin{equation}
\sum_{i=1}^2 \left( M_i(\ddot{u}_i,v_i)+A_i(u_i,v_i) \right )+I(u,v)=\sum_{i=1}^2 L_i(v_i), \quad \forall v=\pair{v}\in V_h^1\times V_h^2.
\label{eq:FEMethod_interface}
\end{equation}
Here $M_i$, $A_i$ and $L_i$ were defined in \eqref{eq:WeakFormM}, \eqref{eq:LinearFormSingleDomain} and \eqref{eq:BiLinearBulkPart}. 
The bilinear form $I$ that enforce the interface conditions is given by 
\begin{equation}\label{eq:BiLinearFormInterfacePart}
I(u,v) = -\bsprod{\{ \sigma(u) \cdot n\}}{\jump{v}}_{\Gamma_I} 
-\bsprod{\jump{u}}{\{\sigma(v) \cdot n\}}_{\Gamma_I} 
+\frac{\gamma_I}{h} \bsprod{\jump{u}}{\jump{v}}_{\Gamma_I}.
\end{equation}

The method contains a number of free parameters that need to be chosen.
Clearly, the penalty parameters related to the stabilization should scale with the parameters of the materials.
We choose to scale them as
\begin{equation}
\gamma_M^i= \frac{1}{4}\rho_i, \quad \gamma_A^i=\frac{1}{2}\eta_i,
\label{eq:stabilization_parameters}
\end{equation}
where
\begin{equation}
\eta_i=2\mu_i+\lambda_i.
\end{equation}
We choose the constants related to the interface terms in the following way
\begin{equation}
\kappa_1 = \frac{\idx{\eta}{2}}{\idx{\eta}{1} + \idx{\eta}{2}}, \quad 
\kappa_2 = \frac{\idx{\eta}{1}}{\idx{\eta}{1} + \idx{\eta}{2}}, \quad
\gamma_{I} =  20p^2\frac{\idx{\eta}{1}\idx{\eta}{2}}{\idx{\eta}{1} + \idx{\eta}{2}}. \label{eq:kappas_convex_combination}
\end{equation}
The scaling with respect to $\eta_i$ is analogous to the choice of parameters for the Poisson interface problem in \cite{CutFEM2014}.
The Nitsche parameter related to the Dirichlet boundary condition is chosen as
\begin{equation}
\gamma_D=5p^2.
\label{eq:Nitsche_parameter}
\end{equation}
Here, the scaling with $p^2$ of $\gamma_D$ and $\gamma_I$ follows from an inverse inequality.
The numerical constants are chosen based on experience.
We shall briefly discuss this in Section~\ref{sec:discussion}.
\subsection{Imposition of Initial Conditions \label{sec:initial_condition}}
In order to impose the initial conditions we first define the stabilised $L_2$-projection, $\Pi_h u$.
For the single domain problem, $\Pi_h u$ is defined as the solution to the following problem: Given $u$, find $\Pi_h u \in V_h$ such that
\begin{equation}
M(\Pi_h u,v)=(u,v)_\Omega, \quad \forall v\in V_h.
\label{eq:L2_projection}
\end{equation}
For the interface problem, $\Pi_h u$ is defined analogously as the solution to:
Given $u$, find $\Pi_h u=\pair{\Pi_hu}\in V_h^1\times V_h^2$ such that
\begin{equation}
\sum_{i=1}^2 M_i(\Pi_h u_i,v_i)=\sum_{i=1}^2 (u,v_i)_\Omega, \quad \forall v=\pair{v}\in V_h^1\times V_h^2.
\label{eq:L2_projection_interface}
\end{equation}
The initial conditions are now imposed as
\begin{equation}
\begin{aligned}
u_h |_{t=0} &= \Pi_h u|_{t=0}, \\
\dot{u}_h |_{t=0} & = \Pi_h \dot{u}|_{t=0}.
\end{aligned}
\label{eq:initial_conditions}
\end{equation}
Note that, by setting the discrete initial conditions in this way, the initial conditions of the single domain problem, \eqref{eq:EWE_Single_initial_u}--\eqref{eq:EWE_Single_initial_dudt}, only need to be defined on $\Omega$ and not on $\Omega^\star$. 
 \section{Theory\label{sec:theory}}
In this section, we will present some theoretical results, in particular, a proof of convergence for the semi-discrete method for the single domain problem.
The proof builds on the results presented in \cite{hansbo2017cut} where several time-independent problems were studied.
During the analysis we will use the following norms:
\begin{equation}  
\|v\|^2_{M} = M(v,v), \quad \|v\|^2_{A} = A(v,v), \quad |v|^2_{j} = j(v,v),
\end{equation}
\begin{equation}
\en{v}^2_h = \|v\|_A^2 + h\|\sigma(v)\|^2_{\Gamma^D} + \frac{1}{h}(2\mu  \|v\|^2_{\Gamma^D} + \lambda \| v\cdot n \|^2_{\Gamma^D}),
\label{eq:tripple_norm_def}
\end{equation}
where we can note that $|\cdot |_j$ is a semi-norm. 
Note that these norms only make sense if the argument is defined on $\Omega^*$.
We will also use the $\lesssim$ -- relation, which we define as 
\begin{equation}
a\lesssim b \Leftrightarrow a \leq C b,
\end{equation}
where $C$ is some constant that is independent of $h$.

We also need a bounded extension operator, $\extension: H^{s}(\Omega)\rightarrow H^{s}(\Omega^\star)$.
We shall assume that the solution is sufficiently smooth ($s$ is sufficiently high) and that $\partial \Omega$ is sufficiently regular so that
\begin{equation}
j(\extension\ddot{u},v) = 0, \quad \forall v\in V_h.
\label{eq:j_of_analytical_is_zero}
\end{equation} 

\subsection{Ritz Projection}
In order to prove convergence we need a ``Ritz-like'' projection, which we define as the solution to the following problem:
Given $u$, find $R_h u \in V_h$ such that
\begin{equation}
A(R_h u,v)=a(u,v), \quad \forall v\in V_h.
\label{eq:Ritz_projection}
\end{equation}
In this section, we will gather some results about the Ritz projection, which will be essential in the analysis to come.
For brevity, we will from here on omit the ``like'' in the Ritz-like projection \eqref{eq:Ritz_projection} and simply call it the Ritz projection.
As shown in \cite{hansbo2017cut}, given that $\gamma_D$ is sufficiently large, $A$ is coercive and continuous with respect to $\en{\cdot}_h$.
That is, there exists constants $C_r, C_c>0$ such that
\begin{equation}
C_r\en{v_h}_h^2 \leq A(v_h,v_h), \quad A(v_h,w_h) \leq C_c \en{v_h}_h\en{w_h}_h, \quad v_h,w_h\in V_h. 
\label{eq:coercive_continuous}
\end{equation}
For simplicity, we will assume that $\Gamma^D\neq \emptyset$.
When this holds, $\en{\cdot}_h$ is indeed a norm (i.e. not only a semi-norm) and \eqref{eq:Ritz_projection} has a unique solution.
However, this assumption can likely be relaxed by looking for the solution in a constrained subspace of $V_h$.

One should note that this projection is nothing but the solution to the time-independent elasticity problem.
To see this, let $\hat{u}(x) = u(x,t_{f})$, where $t_{f}$ is some fixed time, and define $\hat{f}$ so that $\hat{u}$ is the solution to
\begin{equation}
\begin{aligned}
&\nabla \cdot \sigma(\hat{u}) = - \hat{f}, \quad x\in\Omega, \\
&\hat{u} = g^D(x,t_{f}), \quad x \in \Gamma^D, \\
&\pdd{\hat{u}}{n} = g^N(x,t_{f}), \quad x\in \Gamma^N.
\end{aligned}
\label{eq:Ritz_problem}
\end{equation}
This means that  $\hat{u}$ will satisfy
\begin{equation} \label{eq:Ritz_VF}
a(\hat{u},v) = \hat{L}(v),
\end{equation}
where $\hat{L}$ is defined as 
\begin{equation*}
\begin{aligned}
\hat{L}(v) = &(\hat{f},v) + \bsprod{g^N}{v}_{\Gamma_N} -\bsprod{g^D}{\sigma(v)\cdot n}_{\Gamma^D} + \\ &\frac{\gamma_D}{h} \left(2 \mu \bsprod{g^D}{v}_{\Gamma^D} + \lambda \bsprod{g^D\cdot n}{v \cdot n}_{\Gamma^D}\right),
\end{aligned}
\end{equation*}
i.e. the same as $L$ in \eqref{eq:LinearFormSingleDomain} but using the right hand side data from \eqref{eq:Ritz_problem}.
We can now formulate the finite element method to solve \eqref{eq:Ritz_problem} as: Find $\hat{u}_h \in V_h$ such that
\begin{equation} \label{eq:Ritz_FEM}
A(\hat{u}_h,v_h) = \hat{L}(v_h), \quad \forall  v_h \in V_h.
\end{equation}
Now, by subtracting \eqref{eq:Ritz_VF} from \eqref{eq:Ritz_FEM} we can see that the solution $\hat{u}$, to the problem \eqref{eq:Ritz_problem}, in fact corresponds to the Ritz projection $R_h u$ in \eqref{eq:Ritz_projection}.
So in principle the Ritz projection is obtained by solving a linear elasticity problem. This has been treated in detail in \cite{hansbo2017cut}, where the results presented in Lemma~\ref{lem:Ritz_bound} were derived.
\begin{lemma} \label{lem:Ritz_bound}
For the Ritz projection, $R_hu$, in \eqref{eq:Ritz_projection} the following error estimates hold
\begin{equation} \label{eq:Ritz_energy_bound}
\en{R_h u - \extension u}_h \lesssim h^k  \|u\|_{H^{k+1}(\Omega)},
\end{equation}
\begin{equation}\label{eq:Ritz_bound}
\|R_h u - u\|_{\Omega} \lesssim h ^{k+1} \|u\|_{H^{k+1}(\Omega)}.
\end{equation}
\end{lemma}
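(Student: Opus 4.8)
The plan is to prove the two bounds by the two classical devices: the energy bound \eqref{eq:Ritz_energy_bound} follows from a C\'ea-type best-approximation argument in the norm $\en{\cdot}_h$, and the $L_2$ bound \eqref{eq:Ritz_bound} from an Aubin--Nitsche duality argument that gains one power of $h$. The first thing I would establish is Galerkin orthogonality, $A(R_h u - \extension u, v) = 0$ for all $v\in V_h$. Indeed, from the definition \eqref{eq:Ritz_projection} and the fact that $\extension u$ agrees with $u$ on $\Omega$ while $a$ integrates only over $\Omega$, we have $A(R_h u, v) = a(u,v) = a(\extension u, v)$; on the other hand \eqref{eq:WeakFormA} together with the consistency assumption \eqref{eq:j_of_analytical_is_zero} applied to $u$ (legitimate because the smooth extension $\extension u$ has no jumps in its normal derivatives across element faces, so $j(\extension u, v)=0$) gives $A(\extension u, v) = a(\extension u, v)$. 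Subtracting yields the orthogonality.

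For \eqref{eq:Ritz_energy_bound} I would then split $R_h u - \extension u = (R_h u - v_h) + (v_h - \extension u)$ for arbitrary $v_h\in V_h$. Applying coercivity from \eqref{eq:coercive_continuous} to the discrete part $R_h u - v_h$, using orthogonality to replace $R_h u$ by $\extension u$ in one argument, and then using continuity from \eqref{eq:coercive_continuous}, one obtains $\en{R_h u - v_h}_h \lesssim \en{\extension u - v_h}_h$, so by the triangle inequality the whole bound reduces to the best approximation $\inf_{v_h\in V_h}\en{\extension u - v_h}_h$. Choosing $v_h$ to be a cut-sensitive interpolant of $\extension u$ and invoking interpolation estimates in the triple norm \eqref{eq:tripple_norm_def} — controlling the bulk $A$-part, the stabilization seminorm $|\cdot|_j$, and the boundary trace terms on $\Gamma^D$ — together with boundedness of $\extension$ gives $\en{\extension u - v_h}_h \lesssim h^k\|\extension u\|_{H^{k+1}(\Omega^\star)}\lesssim h^k\|u\|_{H^{k+1}(\Omega)}$, which is \eqref{eq:Ritz_energy_bound}.

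For \eqref{eq:Ritz_bound}, write $e = R_h u - \extension u$, which equals $R_h u - u$ on $\Omega$. I would introduce the dual elasticity problem with right-hand side $e$ and homogeneous boundary data, whose weak solution $\phi$ satisfies $a(\phi,w) = (e,w)_\Omega$ and, by elliptic regularity for a sufficiently regular $\Omega$, $\|\phi\|_{H^2(\Omega)}\lesssim\|e\|_\Omega$. Then $\|e\|_\Omega^2 = a(\phi,e) = A(\extension\phi, e)$ (using $(\extension\phi)|_\Omega = \phi$ and $j(\extension\phi,e)=0$), and inserting the Ritz projection $R_h\phi\in V_h$ I would write $A(\extension\phi,e) = A(\extension\phi - R_h\phi, e) + A(R_h\phi, e)$, where the last term vanishes since by symmetry of $A$ it equals $A(R_h u - \extension u, R_h\phi)$, zero by the orthogonality above. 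Applying continuity, the energy estimate in its $k=1$ form to the dual ($\en{\extension\phi - R_h\phi}_h\lesssim h\|\phi\|_{H^2(\Omega)}\lesssim h\|e\|_\Omega$), and \eqref{eq:Ritz_energy_bound} for $u$, gives $\|e\|_\Omega^2 \lesssim h^{k+1}\|u\|_{H^{k+1}(\Omega)}\,\|e\|_\Omega$, and dividing by $\|e\|_\Omega$ yields \eqref{eq:Ritz_bound}.

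The abstract C\'ea and Nitsche manipulations are routine; the genuine work, which is precisely what is carried out in \cite{hansbo2017cut}, lies in the cut-sensitive interpolation estimates in the triple norm, and in particular in bounding the stabilization seminorm $|\cdot|_j$ and the $\Gamma^D$ boundary trace terms uniformly with respect to how the boundary meets the mesh, so that no constant degrades as a cut element becomes small. A secondary delicate point is the consistency identity $j(\extension\phi,e)=0$ for the dual solution, which only possesses $H^2$ regularity: the first normal-derivative jump vanishes because $\partial_n\extension\phi\in H^1$ has matching one-sided traces, but the higher-order contributions must be argued to vanish or be controlled, and this is where the ghost-penalty structure of $j$ and the assumed regularity of $\partial\Omega$ are essential.
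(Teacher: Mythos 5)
The paper does not actually prove this lemma: its ``proof'' is a one-line citation of Theorem~4.2 in \cite{hansbo2017cut}. Your reconstruction follows the same architecture as that reference --- Galerkin orthogonality, a C\'ea argument in $\en{\cdot}_h$ reduced to cut-robust interpolation estimates for \eqref{eq:Ritz_energy_bound}, and Aubin--Nitsche duality for \eqref{eq:Ritz_bound} --- so at the level of strategy there is nothing to object to. One cosmetic caveat: the continuity in \eqref{eq:coercive_continuous} is stated only for discrete arguments, so the C\'ea step needs the extended continuity of $A$ on $V_h + \extension H^{k+1}(\Omega)$; this is exactly what the extra trace terms in the definition \eqref{eq:tripple_norm_def} of $\en{\cdot}_h$ are there for, but it should be said.

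The one step that would fail as written is the identity $j(\extension\phi,e)=0$ in the duality argument. The dual solution $\phi$ has only $H^2$ regularity (its data is the error $e$, which is merely square-integrable, no matter how smooth $u$ is), while $j$ in \eqref{eq:jump_stabilization} penalizes normal-derivative jumps up to order $p$; for $p\geq 2$ the traces $\partial_n^k\extension\phi$ with $k\geq 2$ are not even defined, so you can neither assert that they vanish nor invoke the hypothesis \eqref{eq:j_of_analytical_is_zero}, which is reserved for the (smooth) primal solution. The standard repair is to never evaluate $j$ on $\extension\phi$ at all: write $\|e\|_\Omega^2=a(\extension\phi,e)=a(\extension\phi-\pi_h\phi,e)+a(\pi_h\phi,e)$ with $\pi_h\phi\in V_h$ an interpolant, use orthogonality to get $a(\pi_h\phi,e)=-\gamma_A h^{-2}j(\pi_h\phi,e)$, and bound $|\pi_h\phi|_j$ by noting that on each face $[\partial_n^k \pi_h\phi]=[\partial_n^k(\pi_h\phi-q)]$ for any polynomial $q$ on the two-element patch, so that inverse inequalities and Bramble--Hilbert give $|\pi_h\phi|_j\lesssim h^2\|\phi\|_{H^2(\Omega)}$; combined with $h^{-1}|e|_j\lesssim\en{e}_h$ this closes the $h^{k+1}$ bound. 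With that substitution your outline matches the proof in \cite{hansbo2017cut}.
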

\begin{proof}
See Theorem~4.2 in \cite{hansbo2017cut}.
\qed
\end{proof}
\newpage
\noindent
We shall also need the following corollary.
\begin{corollary}
For the Ritz projection, $R_hu$, in \eqref{eq:Ritz_projection} the following holds
\begin{equation}
|\extension u-R_h u |_j \lesssim h^{k+1} \|u \|_{H^{k+1}(\Omega)}.
\label{eq:j_bound}
\end{equation}
\label{cor:j_bound}
\end{corollary}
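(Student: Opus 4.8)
The plan is to deduce the estimate directly from the energy bound \eqref{eq:Ritz_energy_bound} in Lemma~\ref{lem:Ritz_bound}, exploiting that the stabilization enters the bilinear form $A$ — and hence the energy norm $\en{\cdot}_h$ — weighted by $h^{-2}$. Writing $v=\extension u - R_h u$, the target is the single inequality
\begin{equation*}
\frac{\gamma_A}{h^2}\,|v|_j^2 \;\lesssim\; \en{v}_h^2 ,
\end{equation*}
which, combined with \eqref{eq:Ritz_energy_bound}, immediately gives $|v|_j^2 \lesssim h^2 \en{v}_h^2 \lesssim h^2\,(h^k\|u\|_{H^{k+1}(\Omega)})^2$ and therefore the claimed order $h^{k+1}$ after taking square roots. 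The gain of one power of $h$ relative to the energy estimate is thus entirely an artifact of the $h^{-2}$ scaling of the stabilization in \eqref{eq:WeakFormA}.

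To establish the displayed inequality I would unfold the definitions. By \eqref{eq:symmetric_weak_form} and \eqref{eq:WeakFormA} we have $\|v\|_A^2 = A(v,v) = B(v,v) + D(v,v) + \tfrac{\gamma_A}{h^2}|v|_j^2$, while $\|v\|_A^2 \le \en{v}_h^2$ since the remaining terms in \eqref{eq:tripple_norm_def} are nonnegative. Isolating the stabilization term then yields
\begin{equation*}
\frac{\gamma_A}{h^2}\,|v|_j^2 \;=\; \|v\|_A^2 - B(v,v) - D(v,v) \;\le\; \en{v}_h^2 - D(v,v),
\end{equation*}
where I used that $B(v,v)=2\mu\|\epsilon(v)\|_\Omega^2 + \lambda\|\nabla\cdot v\|_\Omega^2\ge 0$. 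It then remains to bound $-D(v,v)$ by $\en{v}_h^2$.

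For this last step, expand $D$ using \eqref{eq:BiLinearBoundaryPart}: the penalty part $\tfrac{\gamma_D}{h}(2\mu\|v\|^2_{\Gamma^D}+\lambda\|v\cdot n\|^2_{\Gamma^D})$ is nonnegative and is discarded, leaving only the symmetric consistency term $-2\bsprod{\sigma(v)\cdot n}{v}_{\Gamma^D}$. This is controlled by a Young inequality, $2\bsprod{\sigma(v)\cdot n}{v}_{\Gamma^D}\le h\|\sigma(v)\|^2_{\Gamma^D}+\tfrac1h\|v\|^2_{\Gamma^D}$ (using $|n|=1$ so that $\|\sigma(v)\cdot n\|_{\Gamma^D}\le\|\sigma(v)\|_{\Gamma^D}$), and both resulting terms are precisely the boundary contributions appearing in the definition \eqref{eq:tripple_norm_def} of $\en{\cdot}_h$, up to the fixed material constants. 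Hence $-D(v,v)\lesssim\en{v}_h^2$ and the displayed inequality follows.

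The argument is short, so the step I would treat most carefully — the only genuine subtlety — is the sign handling of the consistency term $-2\bsprod{\sigma(v)\cdot n}{v}_{\Gamma^D}$: unlike $B$ and the penalty part, $D$ is indefinite, so the bound cannot be obtained by simply dropping nonnegative pieces. The Young-inequality splitting is what lets its negative part be absorbed into the $h\|\sigma(v)\|^2_{\Gamma^D}$ and $\tfrac1h\|v\|^2_{\Gamma^D}$ contributions already present in $\en{\cdot}_h$; one must choose the Young weights exactly as the powers of $h$ dictated by the triple norm, so that no spurious $h$-dependence creeps into the hidden constant.
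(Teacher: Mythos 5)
Your proof is correct and follows essentially the same route as the paper, whose argument is the one-line observation that $h^{-2}|\extension u-R_hu|_j^2\lesssim\en{R_hu-\extension u}_h^2$ by the definition of $\en{\cdot}_h$, combined with \eqref{eq:Ritz_energy_bound}. You merely spell out in detail why that inequality holds (absorbing the indefinite consistency part of $D$ via Young's inequality into the boundary terms of the triple norm), which the paper leaves implicit.
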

\begin{proof}
From \eqref{eq:Ritz_energy_bound} and the definition of $\en{\cdot}_h$ in \eqref{eq:tripple_norm_def} we get
\begin{equation*}
h^{-2}|\extension u-R_h u |_j^2 \lesssim \en{R_h u - \extension u}_h^2 \lesssim h^{2k}  \|u\|_{H^{k+1}(\Omega)}^2,
\end{equation*}
from which \eqref{eq:j_bound} follows.
\qed
\end{proof}

\subsection{A priori Analysis}
The analysis presented here is similar to the one presented in \cite{sticko2016}.
 We wish to bound the error $u_h -u$ and in doing so we split the error in two parts,
\begin{equation}\label{eq:the_split}
u_h - \extension u = e_N  + e_R
\end{equation}
where $e_N = u_h - R_h u$ and $e_R = R_hu - \extension u.$
By Lemma~\ref{lem:Ritz_bound} we directly get a bound for $e_R$. 
To bound $e_N$ we first aim to find a bound on the ``energy'' of $e_N$, which we define as 
\begin{equation}
E_{e_N} = \frac{1}{2} (M(\dot{e}_N,\dot{e}_N) + A( e_N, e_N)).
\label{eq:energy}
\end{equation}
To facilitate the proof, we will in this section assume that the discrete initial conditions are imposed using the Ritz-projection:
\begin{equation}
\begin{aligned}
u_h |_{t=0} &= R_h u|_{t=0}, \\
\dot{u}_h |_{t=0} & = R_h \dot{u}|_{t=0}.
\end{aligned}
\label{eq:Ritz_as_initial}
\end{equation}
Note that \eqref{eq:Ritz_as_initial} is not the same initial conditions as in \eqref{eq:initial_conditions}, which are used in the numerical experiments.
The reason for this is that computing the Ritz-projection is more involved than computing the $L_2$-projection.
In practice, this most likely makes no difference since the result of both projections approximates the analytical solution with the same order of accuracy.
However, the choice \eqref{eq:Ritz_as_initial} makes the analysis simpler since it is equivalent to
\begin{align}
e_N|_{t=0}=0, \label{eq:eN_zero_initially}\\
\dot{e}_N|_{t=0}=0, 
\end{align}
which by the definition of the energy in \eqref{eq:energy} gives us
\begin{equation}
E_{e_N}|_{t=0} = 0.
\label{eq:EnergyZeroInitially}
\end{equation}
We are now ready to bound the energy.
\newpage
\noindent
\begin{lemma}
The following bound holds
\begin{equation}\label{eq:E_bound}
E_{e_N}(t) \lesssim h^{2(k+1)}.
\end{equation}
\label{lem:energy_bound}
\end{lemma}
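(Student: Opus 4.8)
The plan is to run the classical energy argument for semidiscrete wave equations, adapted to the stabilized cut setting. First I would derive an error equation for $e_N = u_h - R_h u$. The exact solution satisfies the consistent (unstabilized) Nitsche weak form $(\rho\ddot{u},v)_\Omega + a(u,v) = L(v)$ for all $v\in V_h$, and the Ritz projection \eqref{eq:Ritz_projection} gives $a(u,v)=A(R_h u,v)$. Subtracting this from the scheme \eqref{eq:FEMethod_single}, and using that $R_h$ commutes with $\partial_t$ (so that $\partial_t^2 R_h u = R_h\ddot{u}$, obtained by differentiating \eqref{eq:Ritz_projection} twice in time), I would regroup the two $\gamma_M j$-contributions inside $M$ to obtain
\begin{equation*}
M(\ddot{e}_N,v) + A(e_N,v) = (\rho(\ddot{u} - R_h\ddot{u}),v)_\Omega - \gamma_M\, j(R_h\ddot{u} - \extension\ddot{u},v), \quad \forall v\in V_h,
\end{equation*}
where the consistency assumption \eqref{eq:j_of_analytical_is_zero} has been used to turn $j(R_h\ddot{u},v)$ into $j(R_h\ddot{u} - \extension\ddot{u},v)$. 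The right-hand side is thus a pure approximation residual: an $L_2$-projection-type error of $\ddot{u}$ plus a stabilization mismatch.

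Next I would test with $v=\dot{e}_N$. Since $M$ and $A$ are symmetric, the left-hand side equals $\dd{}{t}E_{e_N}$, giving
\begin{equation*}
\dd{}{t} E_{e_N} = (\rho(\ddot{u} - R_h\ddot{u}),\dot{e}_N)_\Omega - \gamma_M\, j(R_h\ddot{u} - \extension\ddot{u},\dot{e}_N).
\end{equation*}
I would bound each term by Cauchy--Schwarz and absorb $\dot{e}_N$ into $\sqrt{E_{e_N}}$. For the bulk term, the mass norm equivalence \eqref{eq:mass_norm_equivalence} gives $\|\dot{e}_N\|_\Omega \le \|\dot{e}_N\|_{\Omega^\star} \lesssim \sqrt{M(\dot{e}_N,\dot{e}_N)} \lesssim \sqrt{E_{e_N}}$ (using $A(e_N,e_N)\ge 0$ by coercivity \eqref{eq:coercive_continuous}), while Lemma~\ref{lem:Ritz_bound} applied to $\ddot{u}$ gives $\|\ddot{u} - R_h\ddot{u}\|_\Omega \lesssim h^{k+1}\|\ddot{u}\|_{H^{k+1}(\Omega)}$. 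For the stabilization term the key observation is that the $\gamma_M j$-part of $M$ controls the $j$-seminorm directly, $\gamma_M |\dot{e}_N|_j^2 \le M(\dot{e}_N,\dot{e}_N) \le 2E_{e_N}$, so $|\dot{e}_N|_j \lesssim \sqrt{E_{e_N}}$, and Corollary~\ref{cor:j_bound} applied to $\ddot{u}$ gives $|R_h\ddot{u} - \extension\ddot{u}|_j \lesssim h^{k+1}\|\ddot{u}\|_{H^{k+1}(\Omega)}$. Together these produce
\begin{equation*}
\dd{}{t} E_{e_N} \lesssim h^{k+1}\|\ddot{u}\|_{H^{k+1}(\Omega)}\, \sqrt{E_{e_N}}.
\end{equation*}

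Finally I would integrate in time. Dividing by $\sqrt{E_{e_N}}$ (formally via $\sqrt{E_{e_N}+\varepsilon}$ with $\varepsilon\to 0$ to avoid the degeneracy at zeros) yields $\dd{}{t}\sqrt{E_{e_N}} \lesssim h^{k+1}\|\ddot{u}\|_{H^{k+1}(\Omega)}$; integrating from $0$ to $t$ and using $E_{e_N}|_{t=0}=0$ from \eqref{eq:EnergyZeroInitially} gives $\sqrt{E_{e_N}(t)} \lesssim h^{k+1}\int_0^t \|\ddot{u}\|_{H^{k+1}(\Omega)}\,\dr s \lesssim h^{k+1}$, provided $u$ is smooth enough in time that the integral is bounded. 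Squaring gives the claim \eqref{eq:E_bound}.

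I expect the main obstacle to be the treatment of the stabilization residual, since the $j$-term breaks exact Galerkin orthogonality. Controlling it cleanly relies on two facts working in tandem: the consistency assumption \eqref{eq:j_of_analytical_is_zero}, which lets me write the residual as a projection error $j(R_h\ddot{u}-\extension\ddot{u},\cdot)$ estimated order-optimally by Corollary~\ref{cor:j_bound}, and the fact that the stabilized mass form $M$ bounds $|\dot{e}_N|_j$ from above by the energy, so the two factors combine to exactly the order $h^{k+1}$ needed. Secondary care is required to justify $\partial_t^2 R_h u = R_h\ddot{u}$ and to ensure the assumed temporal regularity of $u$ makes $\int_0^T\|\ddot{u}\|_{H^{k+1}(\Omega)}\,\dr s$ finite.
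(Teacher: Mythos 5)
Your proposal is correct and follows essentially the same route as the paper: the same error equation (your right-hand side $(\rho(\ddot{u}-R_h\ddot{u}),v)_\Omega - \gamma_M j(R_h\ddot{u}-\extension\ddot{u},v)$ is exactly the paper's $M(-\ddot{e}_R,v)$ written out), the same test function $\dot{e}_N$, the same use of Lemma~\ref{lem:Ritz_bound} and Corollary~\ref{cor:j_bound} on $\ddot{u}$, and the same division by $\sqrt{E_{e_N}}$ followed by time integration from the zero initial energy. The only cosmetic difference is that you apply Cauchy--Schwarz to the bulk and stabilization terms separately, whereas the paper applies it once in the $M$-inner product; your explicit flagging of the commutation $\partial_t^2 R_h u = R_h\ddot{u}$ and the $\varepsilon$-regularization when dividing by $\sqrt{E_{e_N}}$ are minor technical refinements of the same argument.
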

\begin{proof}
First, we have that
\begin{equation}
\begin{aligned}
M(\ddot{e}_N, v_h) + A( e_N, v_h) &= M(\ddot{u}_h,v_h) + A(u_h,v_h) - M(R_h \ddot{u},v_h) - A(R_hu, v_h) \\
&=(\ddot{u},v_h)_\Omega + a(u,v_h) - M(R_h \ddot{u},v_h) - A(R_hu, v_h) \\
&=(\ddot{u},v_h)_\Omega - M(R_h \ddot{u},v_h)\\
&=(\ddot{u},v_h)_\Omega - M(R_h \ddot{u},v_h)+ \gamma_M j(\extension\ddot{u}, v_h)\\
&= M(-\ddot{e}_R,v_h),
\end{aligned}
\label{eq:first_step_energy_bound}
\end{equation}
where we in the first line used the definition of $e_N$.
When going to the second line we used the definition of the finite element method in \eqref{eq:FEMethod_single} and that the analytical solution satisfies
\begin{equation*}
(\ddot{u},v_h)_\Omega+a(u,v_h)=L(v_h), \quad \forall v_h\in V_h.
\end{equation*}
When going to the third line we used the definition of the Ritz projection in \eqref{eq:Ritz_projection}.
Finally we used \eqref{eq:j_of_analytical_is_zero} and the definition of $e_R$.
Now, choosing $v_h = \dot{e}_N$ in \eqref{eq:first_step_energy_bound} we can use the definition of the energy and that $M$ is an inner product (so that Cauchy-Schwarz applies) to get
\begin{equation}
\dd{E_{e_N}}{t} \leq \|\ddot{e}_R\|_M \|\dot{e}_N\|_M \leq \|\ddot{e}_R\|_M \sqrt{2E_{e_N}}.
\label{eq:derivative_of_energy_bound}
\end{equation}
By using
\begin{equation*}
\dd{E_{e_N}}{t}=\dd{}{t} (\sqrt{E_{e_N}})^2 =2\sqrt{E_{e_N}}\,\dd{}{t}\sqrt{E_{e_N}},
\end{equation*}
we can divide both sides of \eqref{eq:derivative_of_energy_bound} by $2\sqrt{E_{e_N}}$ and get
\begin{equation}\label{eq:sqrt_E_bound}
\begin{aligned}
\dd{}{t}\sqrt{E_{e_N}} &\leq\frac{1}{\sqrt{2}} \|\ddot{e}_R\|_M\\
& \leq\frac{1}{\sqrt{2}}\sqrt{\|\ddot{e}_R\|_{\Omega}^2 + \gamma_M |\extension\ddot{u}-R_h\ddot{u}|_j^2}\\
& \leq C h^{k+1}\|\ddot{u}\|_{H^{k+1}(\Omega)},
\end{aligned}
\end{equation}
where we in the last line used Lemma~\ref{lem:Ritz_bound} and Corollary~\ref{cor:j_bound}.
Integrating and squaring \eqref{eq:sqrt_E_bound} gives
\begin{equation}
E_{e_N}(t)\leq \left( \sqrt{E_{e_N}(0)}+Ch^{k+1} \int_0^t \|\ddot{u}\|_{H^{k+1}(\Omega)} d\tau \right)^2.
\label{eq:energy_bound_as_integral}
\end{equation}
Finally, using \eqref{eq:EnergyZeroInitially} gives us the bound in \eqref{eq:E_bound}.
\qed
\end{proof}
We are now ready to state our a priori error estimates.
They are summed up in Theorem~\ref{thm:A_priori}.
\begin{theorem}\label{thm:A_priori}
Let $u$ be the solution to \eqref{eq:EWESingleDomain}--\eqref{eq:EWE_Single_initial_dudt} and let $u_h$ be the solution to \eqref{eq:FEMethod_single}, then at any given time, $t$, the following a priori error estimates hold
\begin{align}
&\|u_h - u \|_{\Omega} \lesssim h^{k+1} ,\label{eq:u_error} \\
&\|\nabla u_h - \nabla u \|_{\Omega} \lesssim h^k .\label{eq:grad_error}
\end{align}
\end{theorem}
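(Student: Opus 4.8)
The plan is to estimate both errors through the splitting \eqref{eq:the_split}, $u_h-\extension u = e_N + e_R$, and the triangle inequality, delegating the two pieces to the two lemmas already at our disposal. Since $\extension u = u$ on $\Omega$, we have $\|u_h-u\|_\Omega \le \|e_N\|_\Omega+\|e_R\|_\Omega$ and $\|\nabla u_h-\nabla u\|_\Omega \le \|\nabla e_N\|_\Omega+\|\nabla e_R\|_\Omega$, so it suffices to bound each of the four terms (the hidden constants absorbing $\|u\|_{H^{k+1}(\Omega)}$ and the length of the time interval under the smoothness assumptions already made).

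For the projection part $e_R = R_h u-\extension u$ there is little to do beyond invoking Lemma~\ref{lem:Ritz_bound}. The $L_2$-estimate \eqref{eq:Ritz_bound} gives $\|e_R\|_\Omega\lesssim h^{k+1}$ directly. For the gradient I would use that the triple norm $\en{\cdot}_h$ dominates the $H^1(\Omega)$-seminorm: the bulk part of $A$ controls $\|\epsilon(e_R)\|_\Omega$, the boundary terms in \eqref{eq:tripple_norm_def} absorb (by Young's inequality) the indefinite Nitsche contribution $-\bsprod{\sigma(e_R)\cdot n}{e_R}_{\Gamma^D}$, and Korn's inequality then yields $\|\nabla e_R\|_\Omega\lesssim\en{e_R}_h\lesssim h^{k}$ by the energy estimate \eqref{eq:Ritz_energy_bound}.

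For the discrete part $e_N = u_h-R_h u\in V_h$ I would start from the energy bound \eqref{eq:E_bound}. Since $M$ is an inner product and $A$ is coercive, both summands of the energy \eqref{eq:energy} are nonnegative, so $A(e_N,e_N)\le 2E_{e_N}\lesssim h^{2(k+1)}$; coercivity \eqref{eq:coercive_continuous} upgrades this to $\en{e_N}_h\lesssim h^{k+1}$, whence Korn's inequality gives $\|\nabla e_N\|_\Omega\lesssim\en{e_N}_h\lesssim h^{k+1}$. The only genuinely new step is the $L_2$-bound: the energy controls the \emph{velocity} $\dot e_N$ in $M$ and the \emph{displacement} $e_N$ in energy norm, but not $e_N$ in $L_2$ directly. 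Because $\Gamma^D\neq\emptyset$ is assumed, $\en{\cdot}_h$ is a genuine norm and a Poincaré inequality (using the $\Gamma^D$ boundary term already present in \eqref{eq:tripple_norm_def}) gives $\|e_N\|_\Omega\lesssim\en{e_N}_h\lesssim h^{k+1}$. Alternatively one may avoid Poincaré by integrating $\|\dot e_N\|_M\lesssim h^{k+1}$ in time: since $e_N|_{t=0}=0$ by \eqref{eq:eN_zero_initially} and $(0,T]$ is bounded, $\|e_N(t)\|_M\le\int_0^t\|\dot e_N\|_M\,d\tau\lesssim h^{k+1}$, and the mass norm equivalence \eqref{eq:mass_norm_equivalence} converts this to $\|e_N\|_\Omega\lesssim h^{k+1}$.

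Combining the four bounds gives $\|u_h-u\|_\Omega\lesssim h^{k+1}+h^{k+1}=h^{k+1}$ and $\|\nabla u_h-\nabla u\|_\Omega\lesssim h^{k+1}+h^{k}=h^{k}$, so the gradient error is limited by the projection part $e_R$ at order $h^{k}$. The main obstacle I anticipate is exactly the passage from the energy-type control delivered by Lemma~\ref{lem:energy_bound} to the $L_2$-norm of the displacement $e_N$ at the \emph{optimal} order $h^{k+1}$: this is where the assumption $\Gamma^D\neq\emptyset$ (hence a Poincaré/Korn inequality with $h$-independent constant on the fixed physical domain $\Omega$) is essential, and where one must verify that no power of $h$ is lost.
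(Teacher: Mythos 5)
Your proposal is correct and follows essentially the same route as the paper: the splitting \eqref{eq:the_split}, Lemma~\ref{lem:Ritz_bound} for $e_R$, Lemma~\ref{lem:energy_bound} for $e_N$, and a triangle inequality, with your ``alternative'' $L_2$-bound on $e_N$ (integrating $\|\dot e_N\|$ in time from $e_N|_{t=0}=0$) being exactly the argument the paper uses in \eqref{eq:bound_derivative_eN}--\eqref{eq:derivative_bound}. Your explicit appeal to Korn's inequality to pass from $\en{\cdot}_h$ to the $H^1$-seminorm only spells out what the paper leaves implicit when combining \eqref{eq:bound2} with Lemma~\ref{lem:Ritz_bound}.
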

\begin{proof}
Using the definition of $E_{e_N}$ and Lemma~\ref{lem:energy_bound} we get 
\begin{alignat}{2}
\|\dot{e}_N \|_\Omega &=\|\dot{u}_h(t) - R_h\dot{u}(t) \|_\Omega && \lesssim h^{k+1}, \label{eq:bound1} \\
\|e_N \|_A &=\|u_h(t) - R_hu(t)\|_A &&\lesssim h^{k+1}. \label{eq:bound2}
\end{alignat}		 
In order to bound $e_N$ and not $\dot{e}_N$ note that 
\begin{equation}
2\|e_N\|_\Omega \dd{}{t}\|e_N\|_\Omega = \dd{}{t}\|e_N\|^2_\Omega = 2(\e_N, \dot{e}_N)_\Omega \leq 2 \|\e_N\|_{\Omega} \|\dot{e}_N\|_{\Omega}.
\label{eq:bound_derivative_eN}
\end{equation}
Dividing \eqref{eq:bound_derivative_eN} by $2\|e_N\|_\Omega$ and integrating over time gives
\begin{equation} \label{eq:derivative_bound}
\|e_N(t) \|_\Omega \leq \int_0^t \|\dot{e}_N(\tau)\|_{\Omega} \, d\tau,
\end{equation}
by using \eqref{eq:eN_zero_initially}.
Combining \eqref{eq:derivative_bound} with \eqref{eq:bound1} gives us
\begin{equation} \label{eq:bound_3}
\|u_h(t) - R_hu(t) \|_\Omega \lesssim h^{k+1}.
\end{equation}
Finally, we use the triangle inequality on \eqref{eq:the_split} and combine \eqref{eq:bound2} and \eqref{eq:bound_3} with the bounds on $e_R$ from Lemma~\ref{lem:Ritz_bound} to get the estimates \eqref{eq:u_error} and \eqref{eq:grad_error}.
\qed
\end{proof}
\subsection{Time Step Restriction} \label{sec:time_step_restriction}
Both of the weak forms \eqref{eq:FEMethod_single} and \eqref{eq:FEMethod_interface} will discretize to a system of the form
\begin{equation}
\mathcal{M} \ddot{\xi}+ \mathcal{A}\xi=\mathcal{L}(t),
\label{eq:discrete_system}
\end{equation}
where $\mathcal{M}\in\R^{N\times N}$ is the mass-matrix, $\mathcal{A}\in\R^{N\times N}$ is the stiffness-matrix and $\mathcal{L}\in\R^{N}$ is the right-hand side vector.

If we use explicit time stepping the largest time step, $\tau$, we can take due to stability restrictions will be bounded by the $C_{FL}$-number as:
\begin{equation}
\tau \leq \alpha C_{FL} h,
\end{equation}
where $\alpha$ is a constant which depends on the chosen time stepping scheme.
The $C_{FL}$-number can be computed from the matrices in the discrete system.
Let $\lambda_{\max}$ be the largest eigenvalue of the generalized eigenvalue problem: find $\lambda$, $x\in \R^N$ such that
\begin{equation}
\mathcal{A} x- \lambda \mathcal{M}x=0.
\end{equation}
Then the $C_{FL}$-number is given by
\begin{equation}
C_{FL}=\frac{1}{h\sqrt{\lambda_{\max}}}.
\label{eq:cfl_from_matrices}
\end{equation}
It is important that the $C_{FL}$-number does not decrease significantly when the smallest cut in the mesh approaches zero.
Ideally, the time step restriction should not be more severe than for the standard non-cut finite element method.

\subsection{Material Parameters}
The problem for the single domain contains three material parameters, $\rho$, $\lambda$ and $\mu$.
However, by rescaling (see \cite{langtangen_scaling_2016}) one can show that the dimensionless equation only depends on the ratio, $\beta$, between the Lamé-parameters:
\begin{equation}
\beta=\frac{\lambda}{\mu}.
\end{equation}
Thus we can without loss of generality assume that the equation is already in dimensionless form and set $\rho=\mu=1$.
Now we can obtain different physical behavior by varying $\lambda$.
For the interface problem, we shall also assume that we are working in dimensionless form.
By a corresponding analysis, it is possible to show that we can set $\rho_1=\mu_2=1$ and obtain different physical behavior by varying $\lambda_1$, $\rho_2$, $\lambda_2$ and $\mu_2$. \section{Numerical Experiments\label{sec:experiments}}
In this section, we present some numerical examples.
First, we investigate if the error converges with the expected order.
This is done for the single domain problem in Section~\ref{sec:experiment_single_convergence} and for the interface problem in Section~\ref{sec:experiment_interface_convergence}. 
In Section~\ref{sec:experiment_decreasing_cut} we investigate how the properties of the discretized matrices in \eqref{eq:discrete_system} change when the smallest cut in the mesh approaches zero.
To implement the method, we have used the finite element library deal.II \cite{dealII}.
A level set function has been used to represent the immersed boundary/interface.
To generate high order quadrature rules on the intersected elements we have used the algorithm from \cite{Saye2015}.

In the experiments below the following material parameters have been used
\begin{align}
&\rho=\rho_1=1,           \quad &&\rho_2=1.1154, \nonumber\\
&\lambda=\lambda_1=1.1429,\quad &&\lambda_2=2.6182, \label{eq:material parameters}\\
&\mu=\mu_1=1,             \quad &&\mu_2=1.8.\nonumber
\end{align}
These parameters correspond to material 1 being sandstone and material 2 being granite, these are two of the most common rock types.
Note that by using the present model we have assumed that the materials are linear, homogeneous and isotropic, which are possibly unrealistic for these types of rock.

For waves in elastic materials, two different wave speeds are of importance.
The pressure-, $c_p$, and shear-wave speed, $c_s$.
These relate to the material parameters as
\begin{equation}
c_p=\sqrt{\frac{\lambda+2\mu}{\rho}}, \qquad c_s=\sqrt{\frac{\mu}{\rho}}.
\end{equation}
The parameters in \eqref{eq:material parameters} correspond to the following wave-speeds
\begin{align}
&c_p=c_{p,1}=1.7728,    \quad &&c_{p,2}=2.3611 \nonumber\\
&c_s=c_{s,1}=1,    \quad &&c_{s,2}=1.2704. \nonumber
\end{align}

For time discretization we have used the explicit fourth order accurate classical Runge-Kutta.
In the experiment below a time step,
\begin{equation*}
\tau =0.2\frac{h}{p^2} \left(\max_{\Omega}(c_p)\right)^{-1},
\end{equation*}
has been used.
Since the condition number of the mass matrix is expected to be large a direct solver was used to invert $\mathcal{M}$ during the time stepping of \eqref{eq:discrete_system}.
\subsection{Convergence for the Single Domain Problem\label{sec:experiment_single_convergence}}
Assume that we have an elastic pressure wave traveling through $\R^2$ in the $x$-direction:
\begin{equation}
u_1^{in}(x,t)=\cos(\omega(t-x/c_p)), \quad u_2^{in}=0.
\label{eq:plane_wave}
\end{equation}
Here, $\omega$ is a constant which we choose as $\omega=\pi$.
Let this wave hit a circular inclusion (vacuum inside) with radius, $R=1$.
At the boundary of the inclusion, $\Gamma_N$, a homogeneous Neumann boundary condition is enforced.
If we consider this problem in all of $\R^2$ the total solution, $\tilde{u}$, will be the sum of the incoming, $u^{in}$, and reflected wave, $u^{ref}$:
\begin{equation}
 \tilde{u}=u^{in}+u^{ref}.
\end{equation}
The reflected wave can be computed analytically. 
The total analytical solution (given in \cite{virta2015formulae}) is periodic in time and can be written as a series expansion in Bessel and Hankel functions. 
In this paper, we truncate the series and use it as our solution $\tilde{u}$.
Since the solution is rather complicated we do not restate the series-expansion here, but merely refer the interested reader to \cite{virta2015formulae}.

Consider now the single domain problem in \eqref{eq:EWESingleDomain}--\eqref{eq:EWE_Dir} posed on the finite domain as in Figure~\ref{fig:single_domain}.
We have a finite square domain with side length $L=2\pi$.
As in Figure~\ref{fig:aligned_immersed_parts}, the outer boundary is aligned with the mesh but the inner boundary is not.
We want to make the solution, $u$, on this truncated domain equal to the analytical solution, $\tilde{u}$, on $\R^2$.
To achieve this, we set the initial conditions equal to $\tilde{u}$:
\begin{equation}
\evaluated{u}{t=0}=\evaluated{\tilde{u}}{t=0}, \quad
\evaluated{\pdd{u}{t}}{t=0}=\evaluated{\pdd{\tilde{u}}{t}}{t=0},
\label{eq:initial_equal_analytical}
\end{equation}
and impose a Dirichlet boundary condition on the outer boundary equal to $\tilde{u}$:
\begin{equation}
\quad\evaluated{u}{\Gamma_D}=\tilde{u}.
\label{eq:dirichlet_equal_analytical}
\end{equation}
We solve this problem until the end time $T=2$ (corresponding to one period) and compute the $L_2$-error for decreasing mesh sizes.
Snapshots of the solution at the initial time and a quarter of a period later are shown in Figure~\ref{fig:inclusion_snapshots}.

The error in $L_2$-norm as a function of element size is shown in Figure~\ref{fig:convergence_inclusion} for $Q_1$- to $Q_3$-elements.
The straight lines in the figure denote the expected order of accuracy.
We see that the order is a bit low for large $h$, but when going to finer $h$ we get the expected order or even slightly higher order than expected.

\begin{figure}
\centering
  \includegraphics[width=.45\paperwidth]{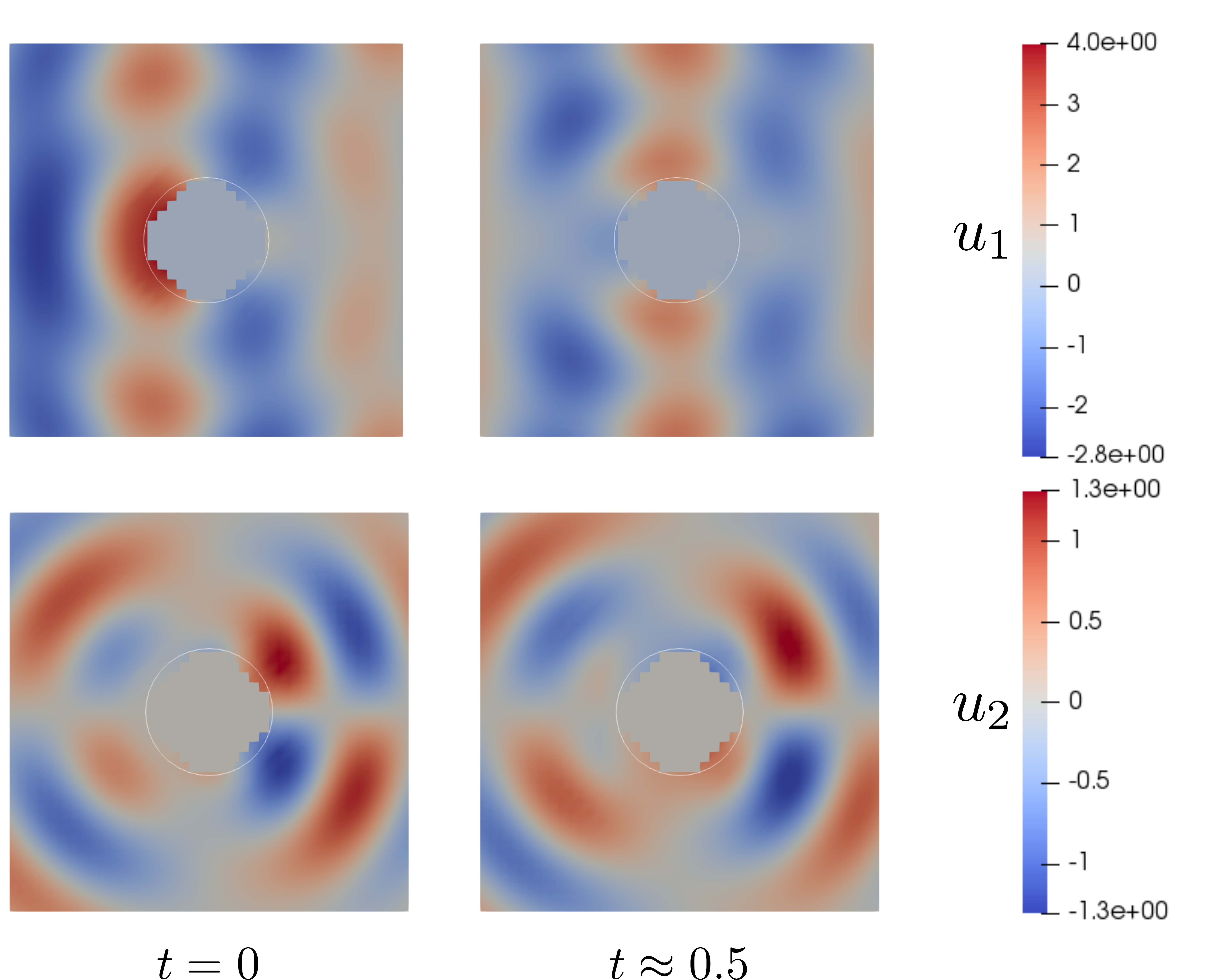}
  \caption{Snapshots of the solved single domain problem \label{fig:inclusion_snapshots}}
\end{figure}

\begin{figure}
\centerline{\includegraphics[width=.4\paperwidth]{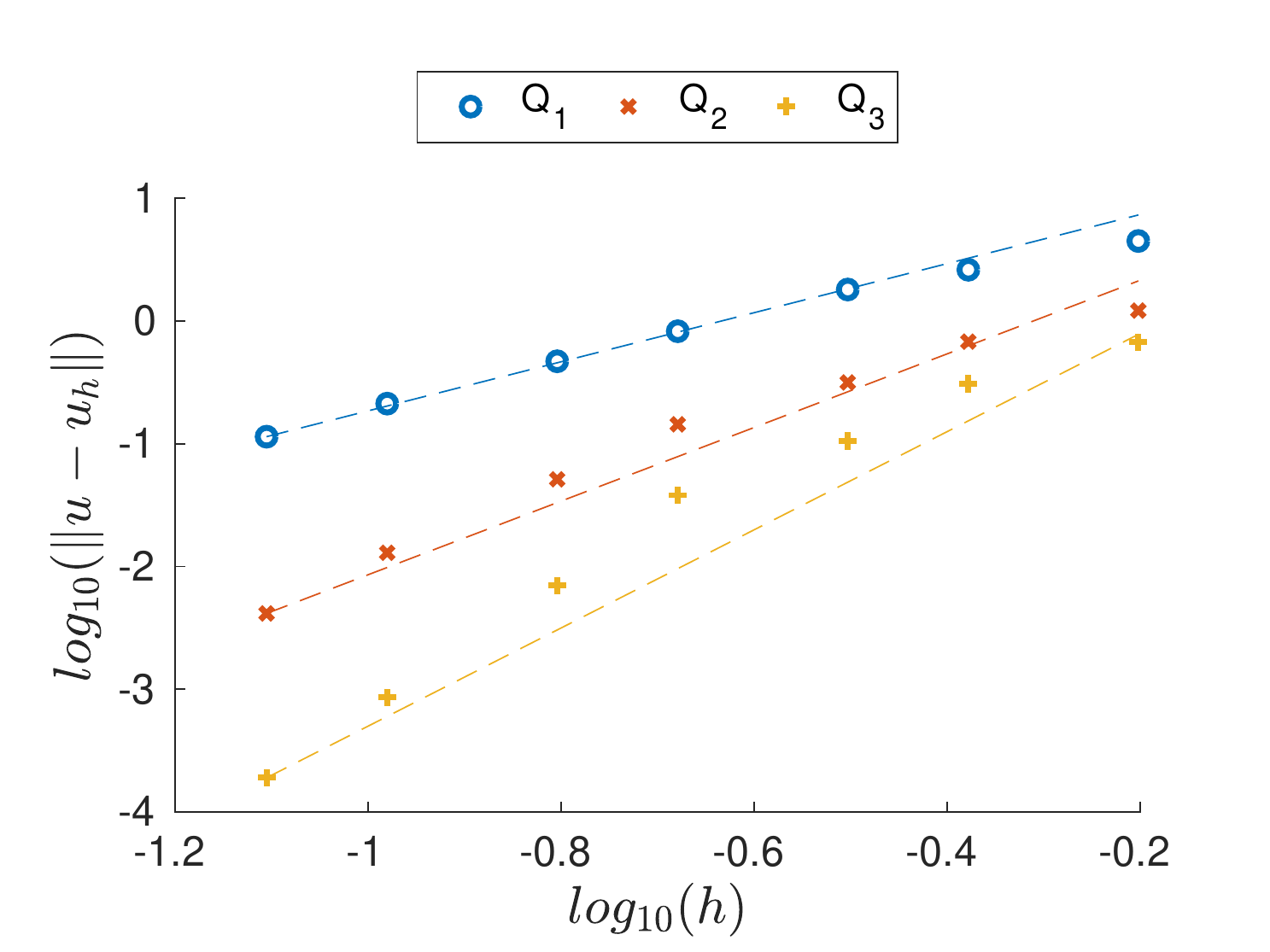}}
  \caption{$L_2$-error versus element size for the single domain problem, together with straight lines corresponding to the expected order of accuracy. \label{fig:convergence_inclusion}}
\end{figure}

\subsection{Convergence for the Interface Problem \label{sec:experiment_interface_convergence}}
Consider now a similar setup as in Section~\ref{sec:experiment_single_convergence}.
We have a plane wave of the form \eqref{eq:plane_wave} traveling through a material in $\R^2$ towards a disc.
The material has properties $\rho_1$, $\lambda_1$, $\mu_1$, and the disc has radius equal to 1.
However, instead of vacuum, we replace the material of the disc by another material with properties $\rho_2$, $\lambda_2$, $\mu_2$.
In the same way as before, the reflected wave can be solved for analytically and the total solution, $\tilde{u}$, can be found in \cite{virta2015formulae} in the form of a series expansion.
We again truncate the series and use it as our solution.

Now we solve the interface problem \eqref{eq:EWECompositeDomain}--\eqref{eq:EWE_Interface_Composite_2} posed on the finite domain in Figure~\ref{fig:interface_domain}.
Again we have a square domain with side length $2\pi$.
To make the solution of the problem equal to the analytical solution we again set the initial condition and the outer Dirichlet boundary condition equal to $\tilde{u}$, as in \eqref{eq:initial_equal_analytical}--\eqref{eq:dirichlet_equal_analytical}.
Snapshots of the solution at two different times are seen in Figure~\ref{fig:interface_snapshots}.
We see that the displacement in the $x$-direction looks like the plane wave in \eqref{eq:plane_wave}, but since the wave-speed is lower in $\Omega_2$ the plane wave gets distorted.

To verify the convergence we solve until the end time $T=2$ (corresponding to one period) and then compute the error. 
The error in $L_2$-norm as a function of element size is seen in Figure~\ref{fig:convergence_interface}.
We see that the order of accuracy is as expected for $Q_1$- and $Q_2$-elements.
For $Q_3$-elements the order is a bit low for large $h$, but eventually reaches the expected order when we go to finer $h$.

\begin{figure}
\centering
  \includegraphics[width=.45\paperwidth]{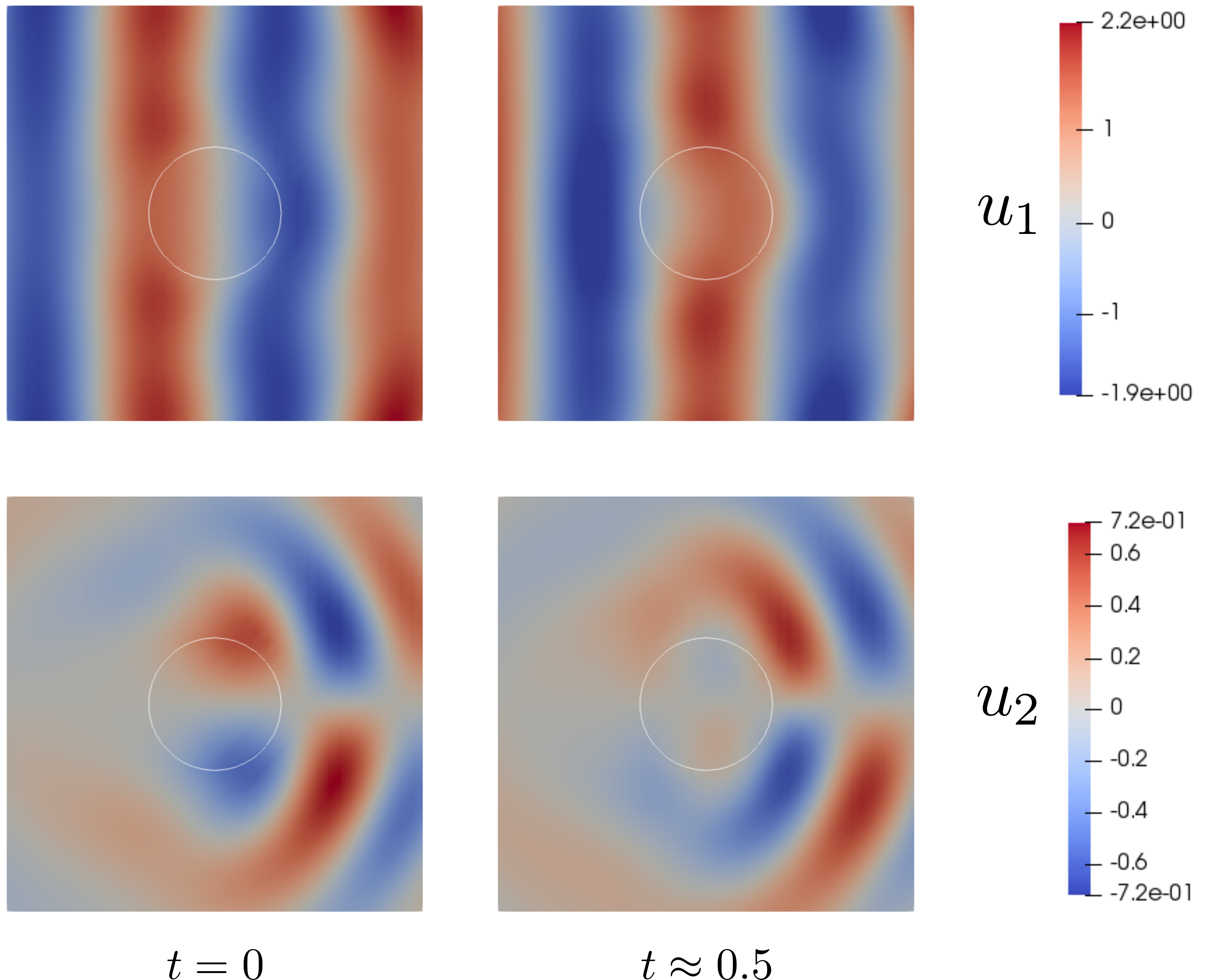}
  \caption{Snapshots of the solved interface problem \label{fig:interface_snapshots}}
\end{figure}

\begin{figure}
\centering
  \includegraphics[width=.4\paperwidth]{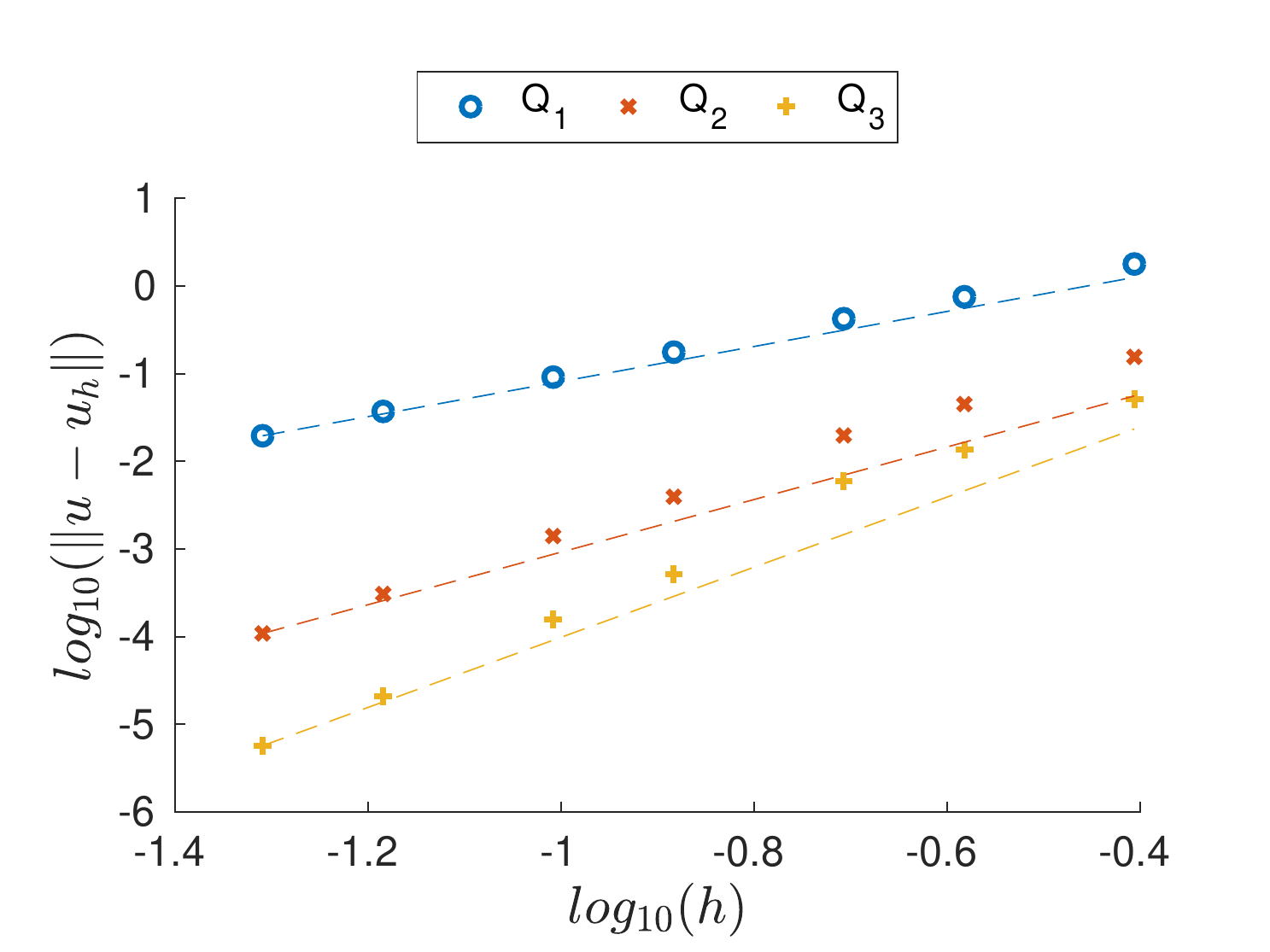}
  \caption{$L_2$-error versus element size for the interface problem,  together with straight lines corresponding to the expected order of accuracy. \label{fig:convergence_interface}}
\end{figure}

\subsection{Matrix Properties with Decreasing Cut-Size \label{sec:experiment_decreasing_cut}}
Consider the setup illustrated in Figure~\ref{fig:decreasing_cut_single} for the single domain and in Figure~\ref{fig:decreasing_cut_interface} for the interface problem.
For both setups, we have a rectangular domain on top of a square grid.
For the single domain problem in Figure~\ref{fig:decreasing_cut_single} the left, bottom and top boundary are aligned with the mesh, but the right domain boundary intersects the last column of elements with a cut of size $h_{cut}$.
For the interface problem, all boundaries are aligned with the mesh boundaries, but the immersed interface intersects the middle column of elements with a cut of size $h_{cut}$.
We are now interested in how the properties of the mass and stiffness matrix change when we vary the size of $h_{cut}$.
In the experiment, we use a background mesh containing $9\times 9$ elements, which is slightly finer than what is illustrated in Figure~\ref{fig:decreasing_cut}.

\begin{figure}[H]
\begin{subfigure}[b]{.3\paperwidth}
  \centering
  \includegraphics[width=.17\paperwidth]{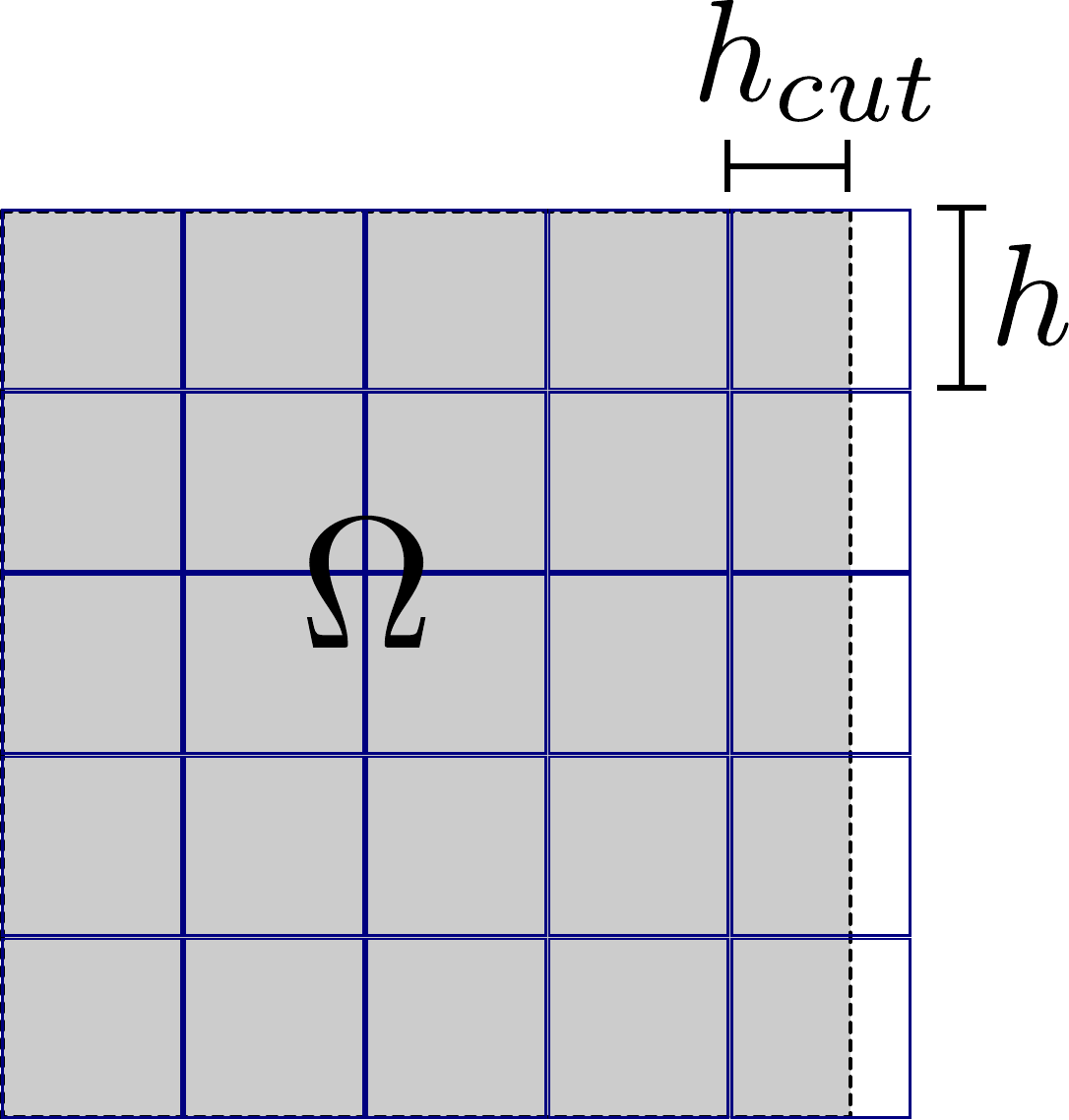}
  \caption{Single domain problem\label{fig:decreasing_cut_single}}
\end{subfigure}
\begin{subfigure}[b]{.3\paperwidth}
  \centering
  \includegraphics[width=.17\paperwidth]{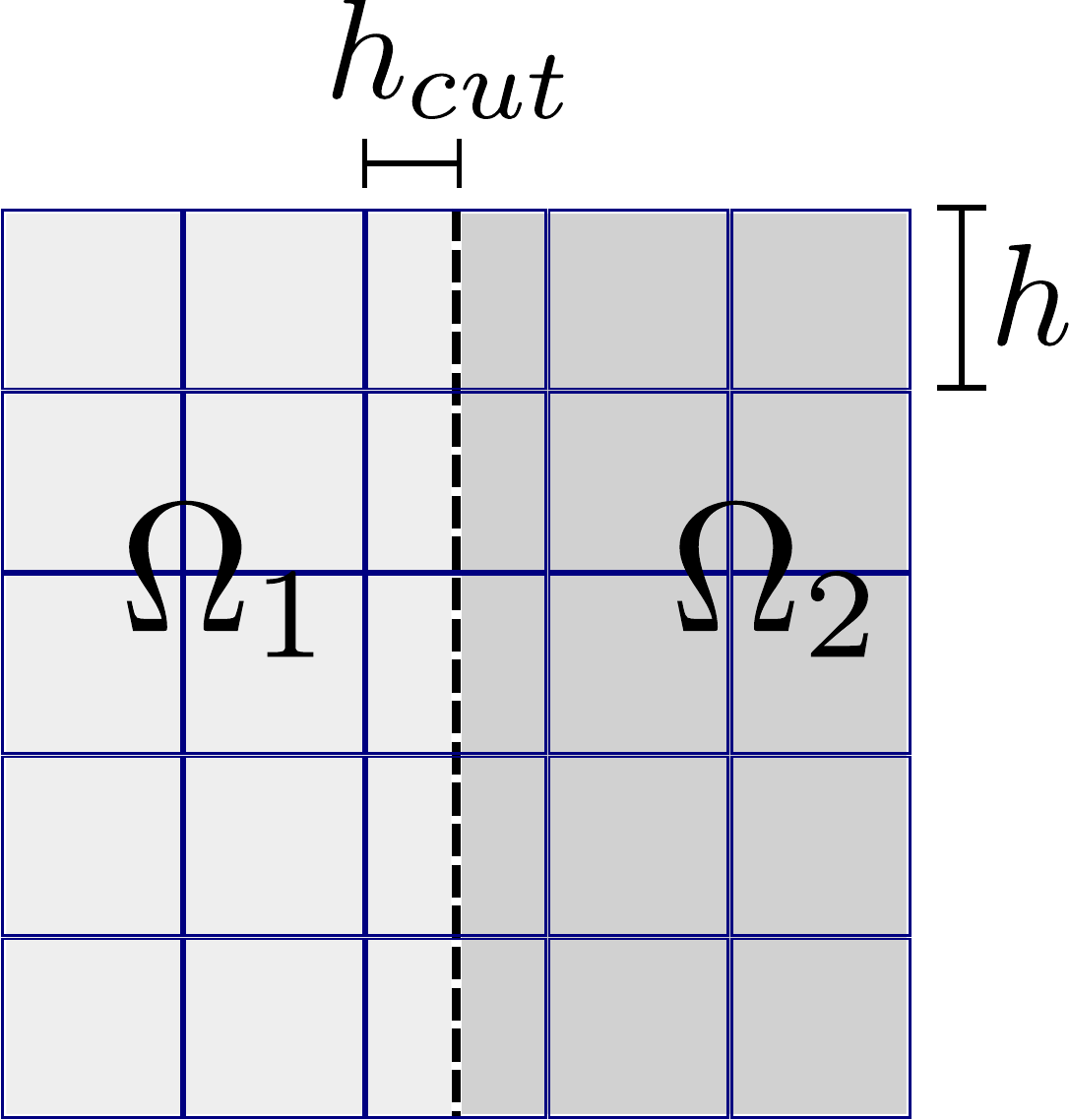}
  \caption{Interface problem\label{fig:decreasing_cut_interface}}
\end{subfigure}
\caption{Experiments where elements are intersected with a cut of size $h_{cut}$ \label{fig:decreasing_cut}}
\end{figure}

How the condition number of the mass matrix changes is seen in Figure~\ref{fig:cond_mass_decreasing_cut_single} for the single domain problem.
We see that when the cut size is large ($h_{cut}/h\approx 1$) the condition number is small and initially grows when $h_{cut}$ is decreased.
However, as the cut-size is decreased further the condition number becomes constant, as expected from the theory.
We also see that the constant level increases very fast when we increase the order of the elements, which is consistent with results previously presented in \cite{sticko2016higher,hansbo2017cut}.

In Figure~\ref{fig:cond_mass_decreasing_cut_interface} we see the condition number of the mass matrix for the interface problem. 
Note that we have $f(h_{cut}/h )$ on the $x$-axis, where
\begin{equation*}
f(x)=\log_{10}\left(x\right)-\log_{10}\left(1-x\right).
\end{equation*}
This makes the $x$-axis ``almost logarithmic'' as $h_{cut}/h$ approaches both $0$ and $1$,
since $f(x)$ is monotone on the interval $(0,1)$ and maps $(0,1)$ to $(-\infty,\infty)$.
In Figure~\ref{fig:cond_mass_decreasing_cut_interface} we see that the behavior is analogous to the single domain problem as $h_{cut}/h$ approaches 0.
We also see that the curve is almost mirrored in the point $h_{cut}=h/2$.
That the curve is not exactly mirrored can be explained by the difference in material parameters.  

In the same way, the condition number of the stiffness matrix is seen in Figure~\ref{fig:cond_stiffness_decreasing_cut_single} and \ref{fig:cond_stiffness_decreasing_cut_interface}.
We see that the dependence is similar as for the mass matrix in Figure~\ref{fig:cond_mass_decreasing_cut_single} and \ref{fig:cond_mass_decreasing_cut_interface}.

The $C_{FL}$-number computed from \eqref{eq:cfl_from_matrices} is shown in Figure~\ref{fig:cfl_decreasing_cut_single} for the single domain problem and in Figure~\ref{fig:cfl_decreasing_cut_interface} for the interface problem.
We see in the figures that the $C_{FL}$-number is completely independent of the size of the cut.
We also see that the $C_{FL}$-number becomes smaller when we increase the order of the elements.
This is also the case when using the standard (non-cut) finite element method.

\begin{figure}[H]
\begin{subfigure}[b]{.3\paperwidth}
  \centering
  \includegraphics[width=.3\paperwidth]{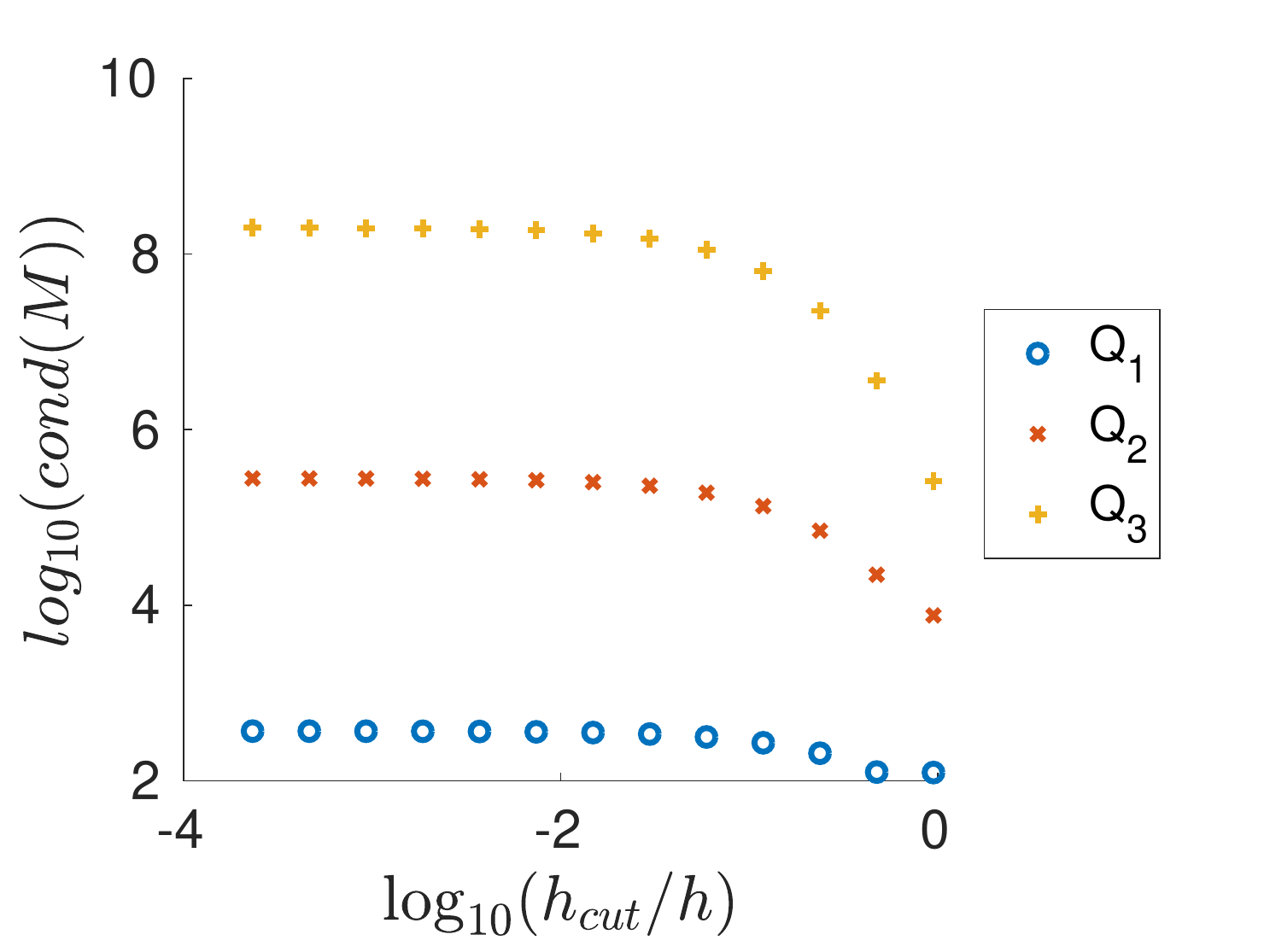}
  \caption{Single domain problem \label{fig:cond_mass_decreasing_cut_single}}
\end{subfigure}
\begin{subfigure}[b]{.3\paperwidth}
  \centering
  \includegraphics[width=.3\paperwidth]{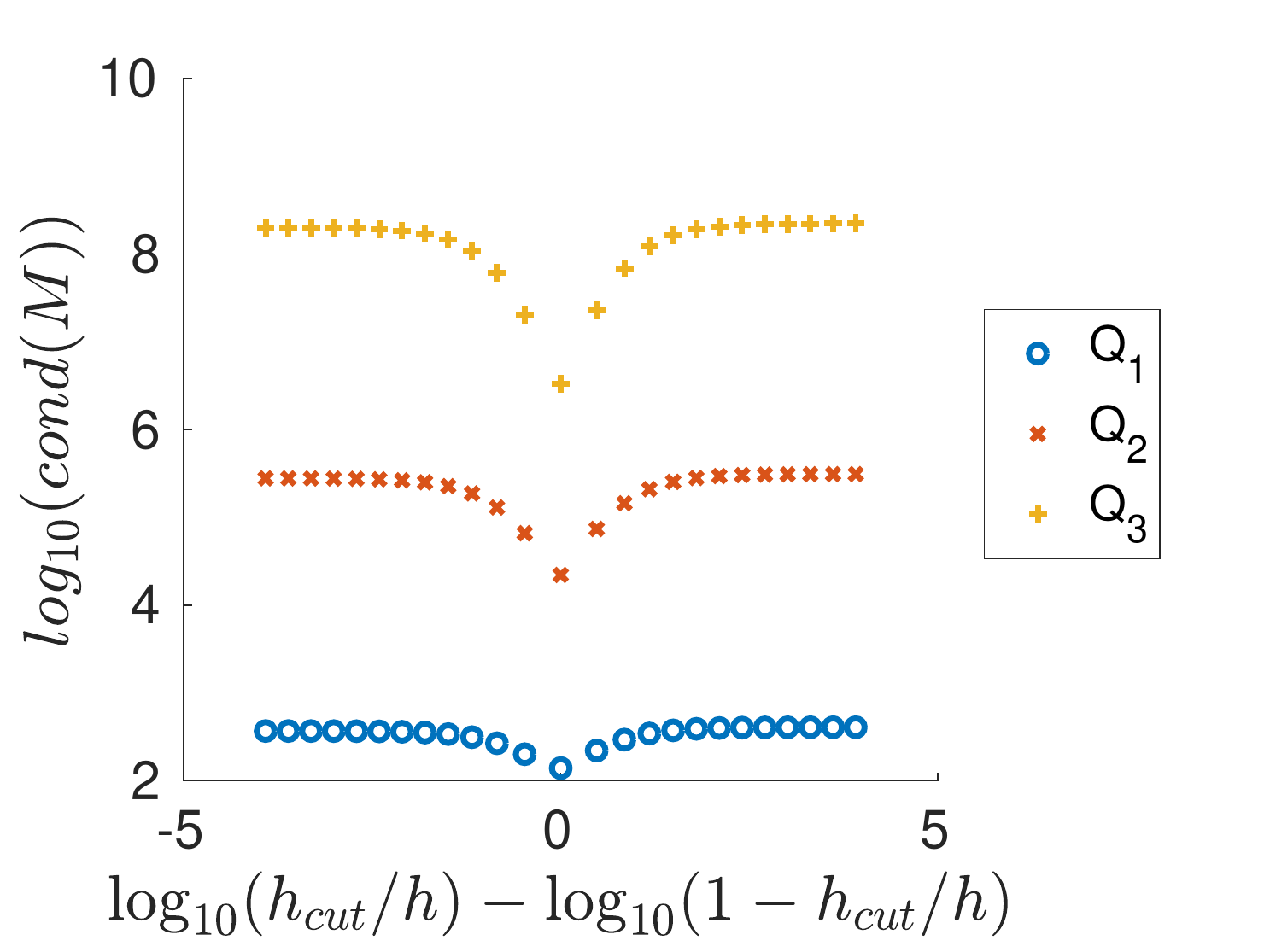}
  \caption{Interface problem \label{fig:cond_mass_decreasing_cut_interface}}
\end{subfigure}
  \caption{Condition number of the mass matrix when decreasing the size of $h_{cut}$ in Figure~\ref{fig:decreasing_cut} \label{fig:cond_mass_decreasing_cut}}
\end{figure}

\begin{figure}[H]
\begin{subfigure}[b]{.3\paperwidth}
  \centering
  \includegraphics[width=.3\paperwidth]{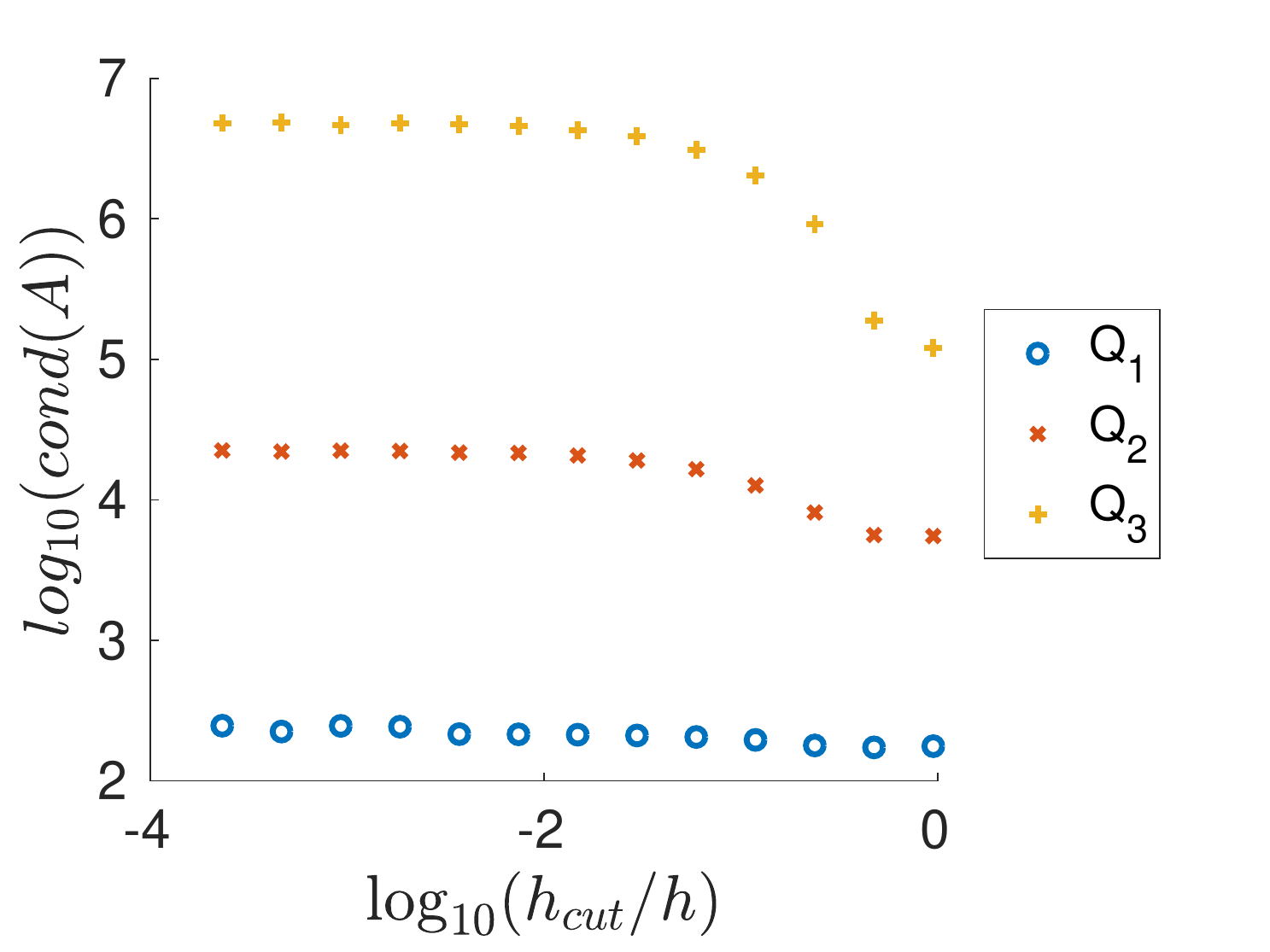}
  \caption{Single domain problem \label{fig:cond_stiffness_decreasing_cut_single}}
\end{subfigure}
\begin{subfigure}[b]{.3\paperwidth}
  \centering
  \includegraphics[width=.3\paperwidth]{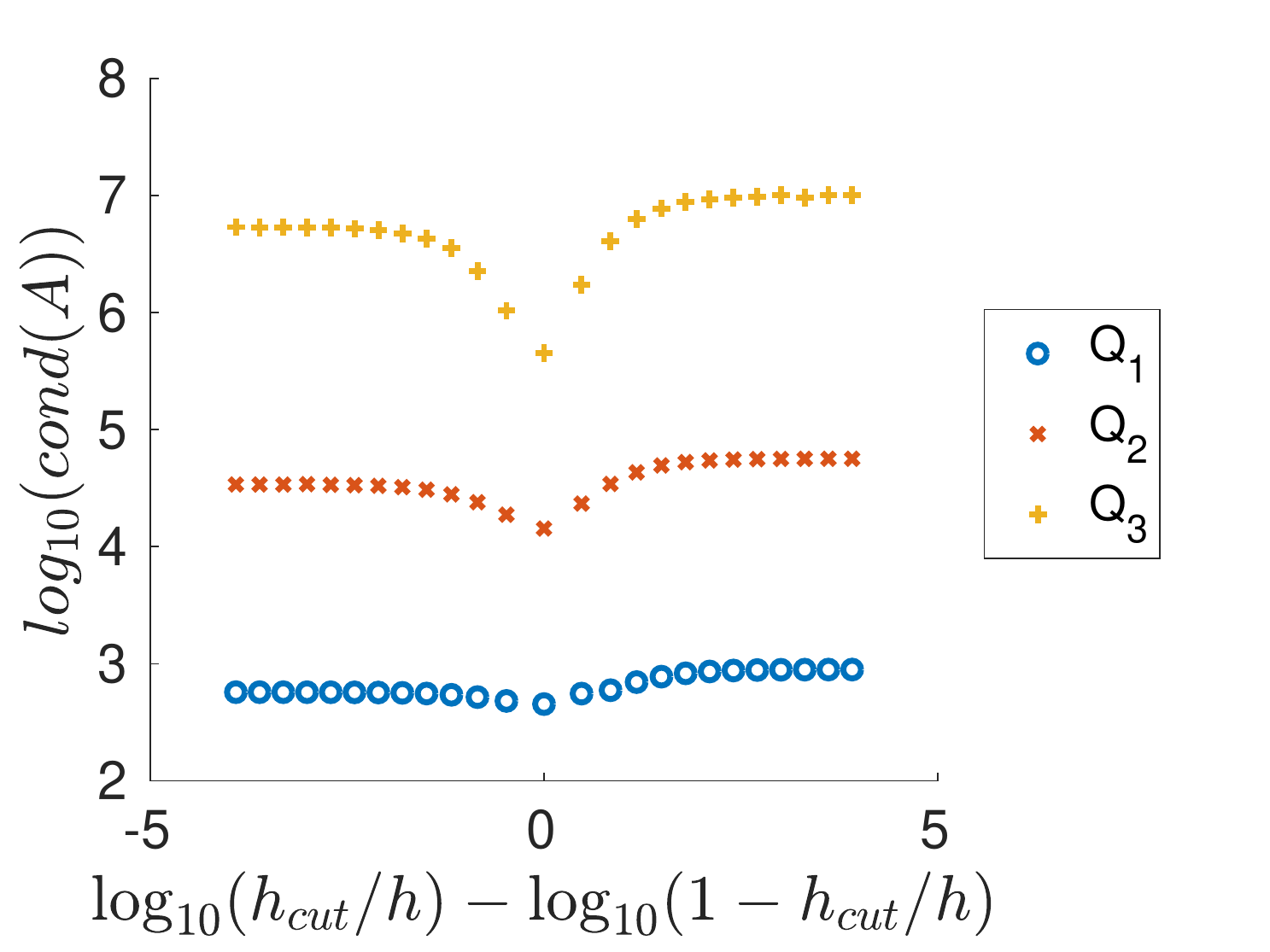}
  \caption{Interface problem\label{fig:cond_stiffness_decreasing_cut_interface}}
\end{subfigure}
  \caption{Condition number of the stiffness matrix when decreasing the size of $h_{cut}$ in Figure~\ref{fig:decreasing_cut} \label{fig:cond_stiffness_decreasing_cut}}
\end{figure}

\begin{figure}[H]
\begin{subfigure}[b]{.3\paperwidth}
  \centering
  \includegraphics[width=.3\paperwidth]{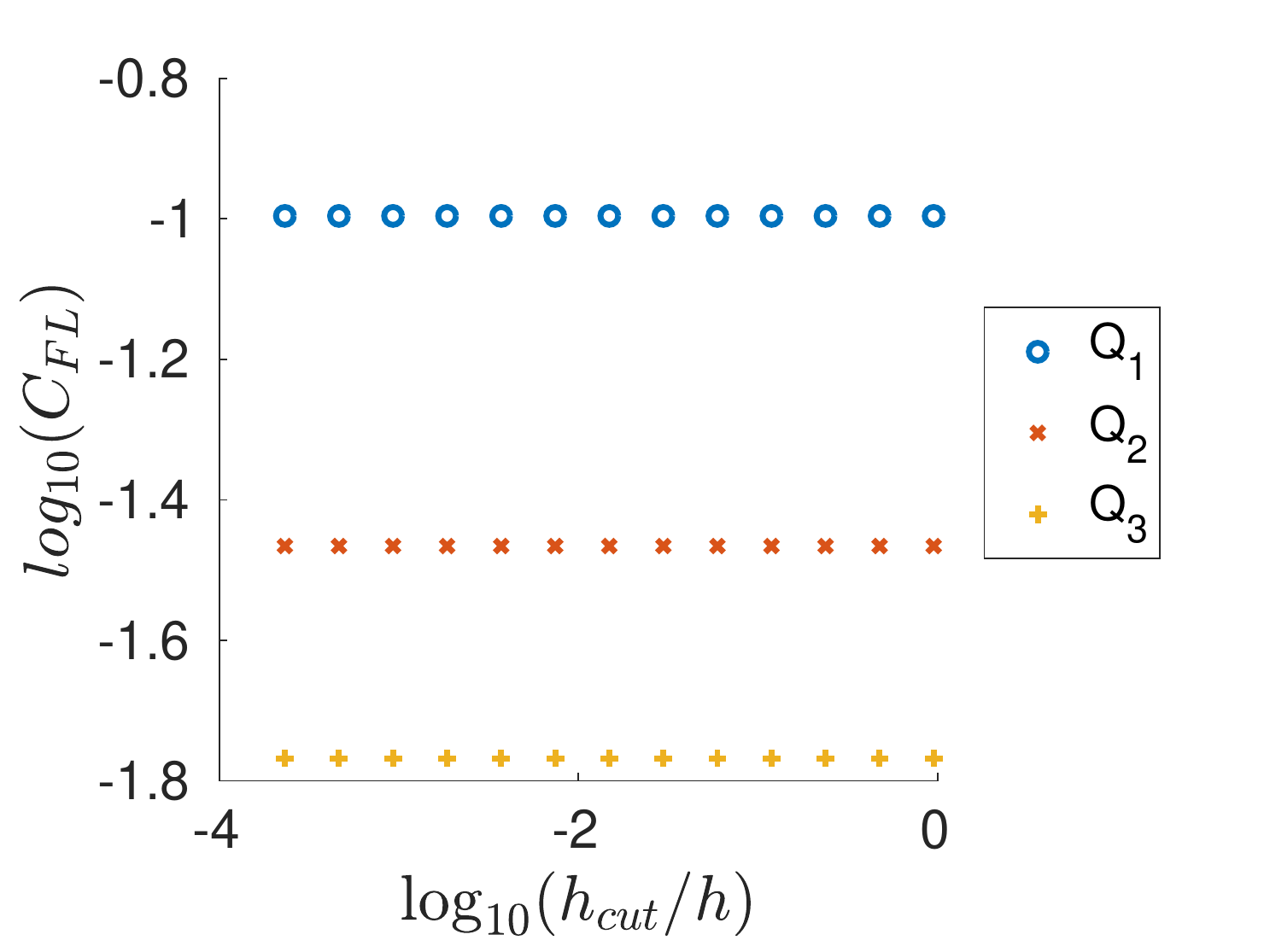}
  \caption{Single domain problem \label{fig:cfl_decreasing_cut_single}}
\end{subfigure}
\begin{subfigure}[b]{.3\paperwidth}
  \centering
  \includegraphics[width=.3\paperwidth]{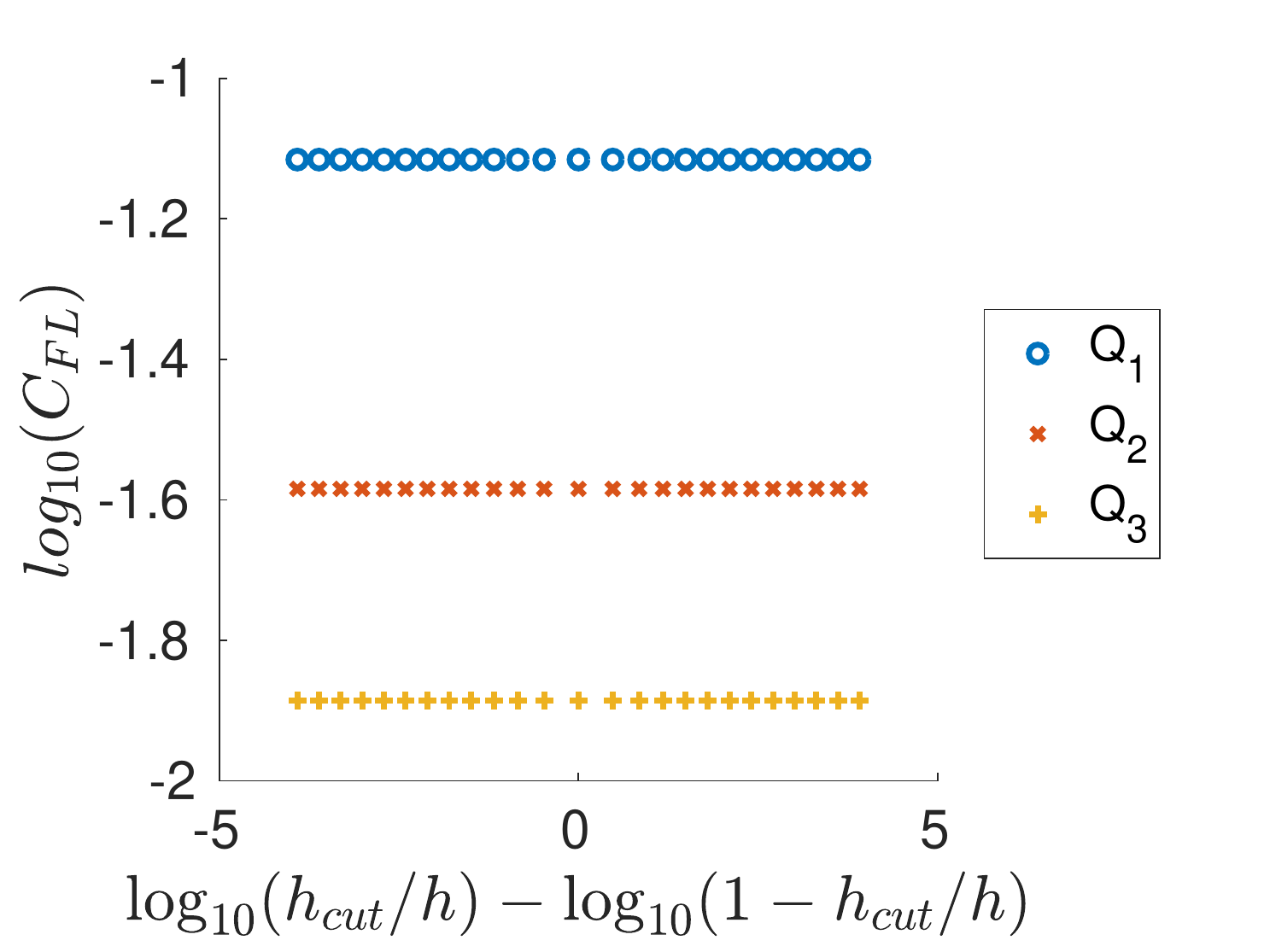}
  \caption{Interface problem \label{fig:cfl_decreasing_cut_interface}}
\end{subfigure}
  \caption{$C_{FL}$-number when decreasing the size of $h_{cut}$ in Figure~\ref{fig:decreasing_cut} \label{fig:cfl_decreasing_cut}}
\end{figure}

 \section{Discussion\label{sec:discussion}}
The numerical experiments in Section~\ref{sec:experiment_single_convergence} and \ref{sec:experiment_interface_convergence} show that the method converges with the orders expected from Theorem~\ref{thm:A_priori}.
Furthermore, from the experiment in Section~\ref{sec:experiment_decreasing_cut} we see that the method is robust when the size of the smallest cut in the mesh approaches zero.

The parameters \eqref{eq:material parameters} of the two materials used in the experiments for the interface problem are different but do not differ significantly.
A future possibility would be to test how more extreme differences in material parameters affect the performance of the method.
For the interface problem, the limit $\mu_2 \rightarrow 0$ is particularly important.
For this case, material 2 stops being elastic and the problem on $\Omega_2$ becomes equivalent to the acoustic wave equation \cite{monkola_numerical_2011}. 
One disadvantage of taking the limit $\mu_2 \rightarrow 0$ is that the problem on $\Omega_2$ still is a system.
Thus one future research direction would be to consider the problem of the elastic wave equation coupled directly with the acoustic wave equation.

The choice of numerical constants in front of $\gamma_M^i$, $\gamma_A^i$, $\gamma_I$ and $\gamma_D$ in \eqref{eq:stabilization_parameters}, \eqref{eq:kappas_convex_combination} and \eqref{eq:Nitsche_parameter} is rather arbitrary.
As far as we have seen the method is not particularly sensitive to the choice of constants.
Still one can wonder what happens when they are chosen differently.
If $\gamma_D$ and $\gamma_I$ are chosen too small coercivity is lost and the method becomes unstable, due to eigenvalues of the stiffness matrix becoming negative.
This has nothing to do with the method being immersed.
The same thing occurs also when symmetric Nitsche techniques are used in non-cut methods.
Generally one wants to choose $\gamma_D$ and $\gamma_I$ close to the stability limit.
If they are chosen larger than necessary the $C_{FL}$-number becomes smaller.
The influence of the stabilization parameters $\gamma_A^i$ and $\gamma_M^i$ on the condition numbers of the mass and stiffness matrix were discussed in \cite{Burman2012,sticko2016}, for linear $P_1$-elements.
There one could see that the condition numbers had a minimum when either stabilization parameter increased from 0.
However, the condition number of either matrix increased rather slowly after passing the minimum.
Thus, choosing $\gamma_M^i$ or $\gamma_A^i$ slightly larger than necessary does not have a severe effect.

As mentioned earlier, high order methods are typically attributed to being more efficient for hyperbolic problems.
We have not investigated whether this is the case for the present method, but there are several aspects that would affect the efficiency.
When increasing the order of elements the order of the quadrature must also be increased.
Creating quadrature rules on the intersected elements is typically expensive, and using more quadrature points means more work.
Whether it pays off to increase the order likely depends on what algorithm is being used to generate the quadrature. 
However, when solving wave propagation problems we are often interested in solving for an extended period of time.
When this is the case the time spent on time integration is typically dominant.
When time stepping \eqref{eq:discrete_system} we need to be able to invert the mass matrix.
If the number of degrees of freedoms is not too large we can afford to factorize it.
Once factorized, inverting the mass matrix is very fast.
However, if the number of degrees of freedom is very large we are forced to use an iterative method.
This is potentially not efficient since we saw in Section~\ref{sec:experiment_decreasing_cut} that the condition number is very large when the element order is high.

\bibliographystyle{spmpsci}
\bibliography{references}

\end{document}